\newtheorem{theorem}{Theorem}[section]
\newtheorem{proposition}[theorem]{Proposition}
\newtheorem{definition}[theorem]{Definition}
\newtheorem{remark}[theorem]{Remark}
\numberwithin{equation}{section}
\newcommand{\qed}{\hspace*{\fill} $\blacksquare$\medskip}
\newenvironment{proof}
{\noindent {\em Proof}.\,\,}
{\qed}
\def \Z {\mathbb Z}
\def \R {\mathbb R}
\def \N {\mathbb N}
\def \ee {\mathrm e}
\def \dd {\mathrm d}
\def \cR {\mathcal R}
\def \b {\beta}
\def \L {\Lambda}
\def \T {\Theta}
\def \h {\eta}
\def \cX {\mathcal X}
\def \cL {\mathcal L}
\def \n {\mu}
\def \D {\Delta}
\def \d {\delta}
\def \SES {{\hbox{\footnotesize\rm SES}}}
\newcommand{\dist}{\mathrm{dist}}
\begin{document}

\author{
\renewcommand{\thefootnote}{\arabic{footnote}}
S.\ Baldassarri \footnotemark[1] 
\\
\renewcommand{\thefootnote}{\arabic{footnote}}
A.\ Gaudilli\`ere \footnotemark[2]
\\
\renewcommand{\thefootnote}{\arabic{footnote}}
F.\ den Hollander \footnotemark[3]
\\
\renewcommand{\thefootnote}{\arabic{footnote}}
F.R.\ Nardi \footnotemark[4]
\\
\renewcommand{\thefootnote}{\arabic{footnote}}
E.\ Olivieri \footnotemark[5]
\\
\renewcommand{\thefootnote}{\arabic{footnote}}
E.\ Scoppola \footnotemark[6]
}

\title{Homogeneous nucleation for\\ 
two-dimensional Kawasaki dynamics}

\footnotetext[1]{
	Gran Sasso Science Institute, Viale Francesco Crispi 7, 67100 L'Aquila, Italy
}

\footnotetext[2]{
	Aix-Marseille Universit\'e, CNRS, I2M, 3 place Victor Hugo, 13\,003 Marseille, France
}

\footnotetext[3]{
	Mathematisch Instituut, Universiteit Leiden, Einsteinweg 55, 2333 CC Leiden, The Netherlands
}

\footnotetext[4]{
Universit\`a di Firenze, Dipartimento di Matematica e Informatica ``Ulisse Dini", Viale Morgagni 67/a 50134, Firenze, Italy
}

\footnotetext[5]{
Dipartimento di Matematica, Universit\`a di Roma Tor Vergata, Via della Ricerca Scientifica, 00133 Roma, Italy
}

\footnotetext[6]{
Dipartimento di Matematica e Fisica, Universit\`a di Roma Tre, Largo S.\ Leonardo Murialdo 1, 00146 Roma, Italy
}

\date{}

\maketitle


\begin{abstract}
This is the third in a series of three papers in which we study a lattice gas subject to Kawasaki dynamics at inverse temperature $\beta>0$ in a large finite box $\Lambda_\beta \subset \Z^2$ whose volume depends on $\beta$. Each pair of neighbouring particles has a \emph{negative binding energy} $-U<0$, while each particle has a \emph{positive activation energy} $\Delta>0$. The initial configuration is drawn from the grand-canonical ensemble restricted to the set of configurations where all the droplets are subcritical. Our goal is to describe, in the metastable regime $\Delta \in (U,2U)$ and in the limit as $\beta\to\infty$, how and when the system nucleates, i.e., creates a critical droplet somewhere in $\Lambda_\beta$ that subsequently grows by absorbing particles from the surrounding gas.  

In the first paper we showed that subcritical droplets behave as quasi-random walks. In the second paper we used the results in the first paper to analyse how subcritical droplets form and dissolve on multiple space-time scales when the volume is \emph{moderately large}, namely, $|\Lambda_\beta| = \ee^{\theta\beta}$ with $\Delta < \theta < 2\Delta-U$. In the present paper we consider the setting where the volume is \emph{very large}, namely, $|\Lambda_\beta| = \ee^{\Theta\beta}$ with $\Theta < \Gamma-(2\Delta-U)$, where $\Gamma$ is the energy of the critical droplet in the local model with fixed volume, and use the results in the first two papers to identify the nucleation time and the tube of typical trajectories towards nucleation. We will see that in a very large volume critical droplets appear more or less independently in boxes of moderate volume, a phenomenon referred to as \emph{homogeneous nucleation}. This leads a nucleation time of order $\ee^{\Gamma\beta}/|\Lambda_\beta|$. One of the key ingredients in the proof is an estimate showing that no information can travel between these boxes on relevant time scales. Prior to nucleation, the dynamics inside the moderately large boxes spends most of its time in configurations consisting of a single quasi-square surrounded by free particles, and makes transitions between these configurations at rates that depend on the linear sizes of the quasi-square in a computable manner.    

\newpage

\vskip 0.5truecm
\noindent
{\it MSC2020.} 
60K35, 
82C26, 
82C27. 
\\
{\it Key words and phrases.} 
Lattice gas, Kawasaki dynamics, metastability, nucleation, critical droplets.
\\
{\it Acknowledgement.}
FdH and FRN were supported through NWO Gravitation Grant NETWORKS 024.002.003. The work of SB was supported by the European Union's Horizon 2020 research and innovation programme under the Marie Sk\l{}odowska-Curie grant agreement no.\ 101034253 while affiliated with Leiden University. SB was further supported through LYSM and ``Gruppo Nazionale per l'Analisi Matematica, la Probabilit\'a e le loro Applicazioni'' (GNAMPA-INdAM). We dedicate our trilogy of papers to Francesca Nardi, who was part of the long adventure to understand Kawasaki dynamics in large volumes, and unfortunately did not live to see the end of her work. All three papers saw the light thanks to Francesca's unwavering commitment and fierce determination.
\end{abstract}


\small
\tableofcontents
\normalsize


\section{Introduction and main results}


\subsection{Background}

The present paper is the third in a series of three papers dealing with \emph{nucleation} in a \emph{supersaturated lattice gas} in a large volume. In particular, we consider a two-dimensional lattice gas at low density and low temperature that evolves under Kawasaki dynamics, i.e., particles hop around randomly subject to hard-core repulsion and nearest-neighbour attraction. We are interested in how the gas \emph{nucleates} in large volumes, i.e., how the particles form and dissolve subcritical droplets until they manage to build a critical droplet that is large enough to trigger the nucleation. In large volumes the evolution of the Kawasaki lattice gas is much more involved than in finite volumes treated earlier \cite{dHOS00a}, \cite{dHOS00b}, \cite{GOS04}. The main difficulty in analysing the metastable behaviour is a proper description of the interaction between the droplets and the surrounding gas. As part of the nucleation process, droplets grow and shrink by exchanging particles with the gas around them, as is typical for \emph{conservative dynamics}. 
	
In the first paper \cite{GdHNOS09} we showed that subcritical droplets behave as quasi-random walks. In the second paper \cite{BGdHNOS23} we used the results in the first paper to analyse how subcritical droplets form and dissolve \emph{on multiple space-time scales} when the volume is \emph{moderately large}. In the present paper we use the results of the first and the second paper to complete our description of the nucleation. As we will see, this requires new and delicate arguments. 
	
The present work resolves the challenges put forward in \cite{dHOS00a} for Kawasaki dynamics in large volumes at low temperature. Earlier work was carried out in \cite{BdHS10}, \cite{GL}. In \cite{BdHS10} the average nucleation time was identified, including a \emph{sharp prefactor}, but \emph{without} providing information on \emph{how} the nucleation takes place. In addition, the results in \cite{BdHS10} only hold for a specific starting distribution called \emph{the last-exit-biased distribution}, which is tuned to the potential-theoretic approach to metastability. This distribution is \emph{unphysical}, in the sense that it is unclear how to prepare a low-density gas in such a way that it starts from this distribution. In contrast, the \emph{Gibbs distribution restricted to subcritical droplets} in \eqref{muR} employed here as starting distribution is \emph{physical}, because it can be obtained by rapidly cooling down a low-density gas. With extra work the prefactor found in \cite{BdHS10} can in principle be obtained as well, since, following \cite{BGM20}, we essentially have to provide rough bounds on local relaxation times only, which can be obtained from \cite{BGdHNOS23}. Finally, in \cite{GL} the transitions between the different ground states are analysed in a regime where there is no pure-gas metastable state and the system starts from a large square droplet with no surrounding gas. In that setting the interaction between the gas and the droplet, which is at the core of the present work, is largely avoided.
	
Our focus is on both the \emph{nucleation time} and the \emph{tube of typical trajectories prior to nucleation}. To derive our main theorems we distinguish between \emph{moderate volumes} and \emph{large volumes}. We first consider moderate volumes, and thanks to the results obtained in \cite{BGdHNOS23} we are able to \emph{fully} describe the escape from metastability. In particular, we show that this escape occurs via \emph{nucleation} rather than via \emph{coalescence}. Subsequently, we consider large volumes. To get the results in this case, we need some form of independence on certain time scales between events occurring in regions separated by more than a \emph{diffusive} distance, i.e., proportional to the square root of time. This represents the major hurdle we need to solve in the present paper. In fact, as shown in \cite{KS05}, information propagates \emph{ballistically}, i.e., linearly in time. The latter means that we cannot obtain the desired results by a simple argument based on the non-superdiffusivity property of the lattice gas particles derived in \cite{GdHNOS09}. It turns out that this near independence can be achieved by a control on the spreading velocity of a rumour in a sparse and moving population. To do so we exploit the work in \cite{GN10}, which derives an upper bound on the velocity of front propagation, from which we deduce that \emph{essentially} no constructive information can travel between distant volumes on appropriate space-time scales. In particular, we will see that moderately large volumes are \emph{not} independent on time scales of the same order as the nucleation time. Nevertheless, we can overcome this obstacle because the independence is needed on a shorter time scale only. Indeed, since the transitions of the reduced Markov chain introduced in \cite[Section 1.3.2]{BGdHNOS23} take place on this shorter time scale, we can project our dynamics on a product of finite state spaces with independent parallel evolutions. This construction, which is developed in Section \ref{sec:upplarge}, represents the main novelty of the present paper. It, in turn, implies that nucleation in large volumes is \emph{homogeneous}, i.e., occurs independently and with equal probability in disjoint moderate volumes that are essentially independent.
	
We refer the reader to \cite{BGdHNOS23} for an extensive description of what is achieved in the three papers together (see \cite[Items (1)--(7) in Section 1.1]{BGdHNOS23}) and what are the key challenges associated with Kawasaki dynamics in large volumes (see \cite[Remarks 1.1--1.3]{BGdHNOS23}). For more background on metastability for interacting particle systems, we refer the reader to the monographs \cite{OV04}, \cite{BdH15} and references therein.  
	
The innovative contributions of the present paper can thus be outlined as follows.
\begin{enumerate}
\item
We provide a detailed description of the escape from metastability for Kawasaki dynamics in large volumes. As illustrated by Theorem \ref{thm:tube} and proved in Section \ref{sec:nocoal}, the escape from metastability occurs via nucleation and \emph{not} via coalescence. The nucleation is \emph{homogeneous}, in the sense that a supercritical droplet appears for the first time more or less independently and with equal probability in disjoint boxes of moderate volume, and trigger the nucleation. More precisely, the configurations in moderately large boxes behave as if they are essentially independent (see Section \ref{sec:upplarge}) and as if the surrounding gas is \emph{ideal}. 
\item
We show that, even on time scales where far-away regions in the gas can interact with each other, information \emph{cannot} travel between them in such a way as to destroy the quasi-independence of the evolutions in these regions. In fact, the growing or shrinking of droplets occur as rare events in the middle of frequent recurrences to local configurations on much shorter time scales, namely, those describing the interaction between the droplets and the gas. See Section \ref{sec:upplarge}.
\item
Information travels at a positive speed as shown in \cite{KS05}. The upper bound on the speed derived in \cite{GN10} depends on the \emph{density} of the gas and allows us to show that, in the low-density limit, far-away volumes that are moderately large are essentially independent. The latter allows us to use the results in the second paper \cite{BGdHNOS23}, valid for moderately large boxes, to tackle the nucleation on large boxes and complete the escape from metastability at low temperature and low density for conservative dynamics.
\end{enumerate}
	
The conservative nature of Kawasaki dynamics prevents us from moving to higher temperatures (and possibly all the way up to the critical temperature), because of \emph{lack of monotonicity} (i.e., \emph{attractiveness}), as for instance employed in \cite{SS98} and \cite{BGM20,GMV}, for non-conservative Glauber dynamics. These papers show that, whatever mathematical techniques are developed, the crucial tool is to first understand how subcritical and supercritical droplets evolve in the low-temperature set-up. This serves as the starting point for tackling the problem of nucleation for conservative dynamics at all subcritical temperatures and is what we achieve in the present paper.

	
\subsection{Kawasaki dynamics}


\paragraph{$\bullet$ Hamiltonian, generator and equilibrium.}	

Let $\b>0$ denote the inverse temperature. Let $\L_\b \subset \mathbb Z^2$ be the square box with volume
\begin{equation}
\label{Lvol}
|\L_\b| = \ee^{\T\b}, \qquad \T>0,
\end{equation}
centered at the origin with periodic boundary conditions. With each $x\in \L_\b$ associate an occupation variable $\h(x)$, assuming the values $0$ or $1$. A lattice gas configuration is denoted by  $\h \in \cX_\beta= \{0,1\}^{\L_\b}$. With each configuration $\eta$ associate an energy given by the Hamiltonian
\begin{equation}
\label{Ham*}
H(\h) = -U \sum_{\{x,y\}\in \L_\b^*}\h(x)\h(y),
\end{equation}
where $\L_\b^*$ denotes the set of bonds between nearest-neighbour sites in $\L_\b$, i.e., there is a \emph{binding energy} $-U<0$ between neighbouring particles. Let 
\begin{equation}
\label{numpart*}
|\eta|=\sum_{x\in\Lambda_\beta}\h(x)
\end{equation}
be the number of particles in $\Lambda_\beta$ in the configuration $\h$,  and let
\begin{equation}
\label{Npart}
\cX_N = \{\h\in\cX_\beta\colon\,|\eta|=N\}
\end{equation}
be the set of configurations with $N$ particles. 

We define Kawasaki dynamics as the continuous-time Markov chain $X=(X(t))_{t\geq0}$ with state space $\cX_N$  given by the generator
\begin{equation}
\label{gendef}
(\cL f)(\h)=\sum_{\{x,y\}\in\L_\b^*}c(x,y,\h)[f(\h^{x,y})-f(\h)],
\qquad \eta\in{\cal X}_{\beta},
\end{equation}
where
\begin{equation}
\label{confexch}
\h^{x,y}(z)= \left\lbrace\begin{array}{lcl}
\h(z) &{\rm if} & z \neq x,y, \\
\h(x) &{\rm if} & z=y, \\
\h(y) &{\rm if} & z=x,  \end{array}\right.
\end{equation}
and
\begin{equation}
\label{rate}
c(x,y,\h)= \ee^{-\b [H(\h^{x,y})-H(\h)]_+}.
\end{equation}
Equations~\eqref{gendef}--\eqref{rate} represent the standard \emph{Metropolis dynamics} associated with $H$, and is \emph{conservative} because it preserves the number of particles, i.e., $|X(t)|=|X(0)|$ for all $t>0$. The \emph{canonical Gibbs measure} $\n_N$ defined as
\begin{equation}
\label{nuN}
\n_N(\h) = {\ee^{-\b H(\h)}\,1_{\cX_N}(\h) \over Z_N},
\qquad Z_N = \sum_{\h\in \cX_N} \ee^{-\b H(\h)},
\qquad \h \in \cX_\beta,
\end{equation}
is the reversible equilibrium of this stochastic dynamics for any $N$:
\begin{equation}
\label{rev}
\n_N(\h) c(x,y,\h) = \n_N(\h^{x,y}) c(x,y,\h^{x,y}).
\end{equation}
With $\Delta>0$ the \emph{activation energy} per particle, the grand-canonical Gibbs measure $\mu$ is defined by
\[
\mu(\eta) = \dfrac{\ee^{-\beta [H(\eta) + \Delta |\eta|]}}{Z}, \qquad Z 
= \sum_{\h\in \cX_\beta} \ee^{-\beta [H(\eta) + \Delta |\eta|]}, \qquad \eta\in\cX_\beta.
\]
This models the presence of an external reservoir that keeps the density of particles in $\Lambda_\beta$ fixed at $\ee^{-\beta\Delta}$.


\paragraph{$\bullet$ Subcritical, critical and supercritical droplets.}

The initial configuration is chosen according to the \emph{grand-canonical Gibbs measure} restricted to the set of subcritical droplets. More precisely, denote by 
\begin{equation}
\label{def:lc}
\ell_ c = \Big\lceil \frac{U}{2U-\Delta}\Big\rceil
\end{equation}
the critical length introduced in \cite{dHOS00a} for the \emph{local model} where $\Lambda_\beta=\Lambda$ does not depend on $\beta$ (see Fig.~\ref{fig-cancrit}).  The energy of the critical droplet in the local model equals
\begin{equation}
\Gamma = - U[(\ell_c-1)^2+\ell_c(\ell_c-2)+1] +\Delta [\ell_c(\ell_c-1)+2].
\end{equation}

\begin{figure}
\setlength{\unitlength}{0.20cm}
\begin{picture}(15,15)(-12,0)
\qbezier[40](10,-4)(20,-4)(30,-4)
\qbezier[40](10,-4)(10,3)(10,15)
\qbezier[40](10,15)(20,15)(30,15)
\qbezier[40](30,-4)(30,10)(30,15)
\put(15,0){\line(1,0){10}}
\put(15,11){\line(1,0){10}}
\put(15,0){\line(0,1){11}} 
\put(26,12){\line(1,0){1}}
\put(27,12){\line(0,1){1}}
\put(27,13){\line(-1,0){1}}
\put(26,13){\line(0,-1){1}}
 \put(25,0){\line(0,1){5}}
\put(25,6){\line(0,1){5}}
\put(26,5){\line(0,1){1}}
\put(25,5){\line(1,0){1}}
\put(25,6){\line(1,0){1}}
\qbezier[5](25,5)(25,5.5)(25,6)
\put(13,5.5){$\ell_c$}
\put(18.5,-2){$\ell_c-1$}
\put(28.5,-3){$\Lambda$}
\end{picture}
\vspace{1cm}
\caption{\small A critical droplet in a finite volume $\Lambda$: a protocritical droplet, consisting of an $(\ell_c-1) \times \ell_c$ quasi-square with a single protuberance attached to one of the longest sides, and a free particle nearby. When the free particle attaches itself to the protuberance, the droplet becomes supercritical. \normalsize}
\label{fig-cancrit}
\end{figure}
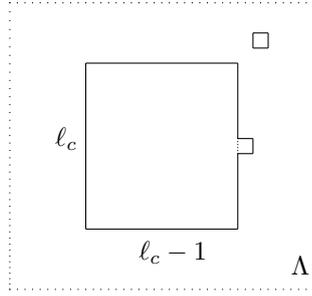 
By defining a cluster as a connected component of nearest-neighbours particles, define
\begin{equation}
\label{def:R}
{\cal R} = \big\{\eta\in{\cal X}_\beta\colon\,\hbox{all clusters of } \eta \hbox{ have volume at most } \ell_c(\ell_c-1)+2\big\}
\end{equation}
and put
\begin{equation}
\label{muR}
\mu_{\cal R}(\h) = \frac{\ee^{-\b[H(\h)+\Delta |\eta|]}}{Z_{\cal R}}\,1_{\cal R}(\eta), \qquad \h\in\cX_\beta,
\end{equation}
where
\begin{equation}
\label{ZR}
Z_{\cal R} = \sum_{\h\in {\cal R}} \ee^{-\b [H(\h)+\Delta |\eta|]}
\end{equation}
is the normalising partition sum. Write $P_\eta$ for the law of $X$ given $X(0)=\eta$. The initial configuration $X(0)$ is drawn from $\mu_{\cal R}$, i.e.,  the initial law is $P_{\mu_{{\cal R}}} = \int_{\cX_\beta} \mu_{{\cal R}}(\dd \eta) P_\eta$.

We will be interested in the regime $\D \in (U,2U)$ and $\b \to\infty$, which corresponds to \emph{metastable behaviour}. In this regime, droplets with side length smaller than $\ell_c$ have a tendency to shrink, while droplets with a side length larger that $\ell_c$ have a tendency to grow \cite{dHOS00a}. We will refer to the former as \emph{subcritical droplets} and to the latter as \emph{supercritical droplets}. To avoid trivialities and ensure that $\ell_c>2$, we assume that
\begin{equation}
\label{metreg}
\D \in \left(\tfrac32 U,2U\right).
\end{equation}



\subsection{Main theorems}


\paragraph{$\bullet$ Nucleation time.}

Let 
\begin{equation}
\tau_{\cR^c} = \inf\{t \geq 0\colon\, X(t) \notin \cR\}.
\end{equation}

\begin{theorem}{\bf [Homogeneous nucleation time]}
\label{thm:escape}
Subject to \eqref{metreg}, for every $\D<\T <\Gamma-(2\Delta-U)$ and every $\d>0$, 
\begin{equation}
\lim_{\beta\to\infty}\frac{1}{\beta}\ln
P_{\mu_{\cR}}\Biggl(\tau_{\cR^c}\notin\Biggl[
\frac{e^{\Gamma\beta}}{|\Lambda_\beta|}e^{-\delta\beta},
\frac{e^{\Gamma\beta}}{|\Lambda_\beta|}e^{\delta\beta}\Biggr]\Biggr)<0.
\end{equation}
\end{theorem}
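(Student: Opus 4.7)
The plan is to decompose $\L_\b$ into many disjoint moderately large sub-boxes, transfer the nucleation-time estimate from \cite{BGdHNOS23} to each of them, and then combine these per-box estimates via an approximate-independence argument based on the front-propagation upper bound of \cite{GN10}. Concretely, I would fix an auxiliary exponent $\theta\in(\D,2\D-U)$ with $\theta<\T$, partition $\L_\b$ into $N_\b=\ee^{(\T-\theta)\b}$ disjoint moderate sub-boxes $\L_\b^{(1)},\dots,\L_\b^{(N_\b)}$ each of volume $\ee^{\theta\b}$, and observe that the restriction of $\mu_{\cR}$ to each sub-box agrees, on the exponential scale, with the analogue of $\mu_{\cR}$ defined on that sub-box, since the density $\ee^{-\D\b}$ is so low that boundary corrections between sub-boxes are negligible. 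The main result of \cite{BGdHNOS23} then provides, for each single sub-box, a nucleation time concentrated on the logarithmic scale around $\ee^{(\Gamma-\theta)\b}$.

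The core step will be to promote the per-box estimate into effective independence across the $N_\b$ sub-boxes on the time scale $T_\b:=\ee^{(\Gamma-\T)\b}$. For this I would invoke the bound from \cite{GN10} on the front-propagation velocity of any growing droplet, which is at most polynomial in $\b$ times $\ee^{-(2\D-U)\b}$. Consequently, within a window of length $T_\b\,\ee^{\d\b}$ the distance travelled by any droplet is at most of order $\ee^{(\Gamma-\T-(2\D-U)+\d)\b}$. Since $\T<\Gamma-(2\D-U)$, for $\d$ small enough this is much smaller than the linear size $\ee^{\theta\b/2}$ of a moderate sub-box. Up to super-polynomially small error, no droplet born in $\L_\b^{(i)}$ then reaches $\L_\b^{(j)}$ with $j\neq i$ during the relevant window, which allows me to couple the restriction of $X$ to each sub-box with $N_\b$ independent copies of the single-box process at negligible cost on the logarithmic scale.

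With this decoupling in hand, the two tails in Theorem \ref{thm:escape} follow almost mechanically from the per-box tails of \cite{BGdHNOS23}. For the upper tail, the per-box lower bound on the probability of nucleating within time $T_\b\,\ee^{\d\b}$, combined with approximate independence across the $N_\b$ boxes, bounds $P_{\mu_{\cR}}(\tau_{\cR^c}>T_\b\,\ee^{\d\b})$ by an expression of the form $(1-q_\b)^{N_\b}$ in which $N_\b q_\b$ diverges super-exponentially, giving super-exponential decay. For the lower tail, a union bound over the $N_\b$ sub-boxes bounds $P_{\mu_{\cR}}(\tau_{\cR^c}<T_\b\,\ee^{-\d\b})$ by $N_\b$ times the per-box probability of early nucleation, which \cite{BGdHNOS23} makes decay exponentially at an arbitrarily large rate, easily absorbing the factor $N_\b=\ee^{(\T-\theta)\b}$.

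The main obstacle will be the decoupling step. Upgrading the deterministic front-velocity bound of \cite{GN10} into a genuine approximate-independence statement for the Kawasaki dynamics on $N_\b$ sub-boxes requires careful probabilistic control of the free-particle cloud crossing between neighbouring sub-boxes, together with an argument ruling out that $\cR$ is left via the coalescence of two subcritical droplets straddling a common sub-box boundary rather than via genuine nucleation inside a single sub-box. It is precisely the compatibility between the moderate-volume analysis of \cite{BGdHNOS23} (which forces $\theta<2\D-U$) and the front-propagation bound (which forces $\T<\Gamma-(2\D-U)$) that singles out the admissible range $\D<\T<\Gamma-(2\D-U)$ of the very-large-volume regime studied here.
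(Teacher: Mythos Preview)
Your high-level strategy---partition into moderate sub-boxes, invoke per-box nucleation estimates, and decouple the sub-boxes via the front-propagation bound of \cite{GN10}---is close in spirit to the paper's large-volume argument (Section~5), but the decoupling step as you state it is flawed, and the paper actually proceeds quite differently.

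The concrete problem is your computation of the distance travelled. You claim that \cite{GN10} gives a front-propagation velocity of order $\ee^{-(2\D-U)\b}$ and conclude that over the window $T_\b\,\ee^{\d\b}=\ee^{(\Gamma-\T+\d)\b}$ the distance is $\ee^{(\Gamma-\T-(2\D-U)+\d)\b}$, which you say is small because $\T<\Gamma-(2\D-U)$. But that inequality makes the exponent \emph{positive}, so the distance is exponentially \emph{large}; your logic is inverted. In fact, no reasonable front-propagation bound will keep a static partition of $\L_\b$ into sub-boxes of linear size $\ee^{\theta\b/2}$ decoupled over the full nucleation horizon $\ee^{(\Gamma-\T)\b}$ across the whole range $\D<\T<\Gamma-(2\D-U)$; free particles diffuse much further than $\ee^{\theta\b/2}$ on that time scale when $\T$ is close to $\D$. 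The constraint $\T<\Gamma-(2\D-U)$ in the theorem does not come from the decoupling at all---it is what guarantees that the nucleation time $\ee^{(\Gamma-\T)\b}$ exceeds the time $\ee^{(2\D-U)\b}$ needed for a supercritical droplet to actually grow a row.

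The paper avoids this obstruction by replacing your static partition with \emph{dynamic clouds}: at each return time to $\cX_{\Delta^+}$ (on time scale $\ee^{(\D+\alpha)\b}$) it rebuilds a collection of rectangles around the currently sleeping particles, separated by a slowly shrinking distance $r_j\sim\ee^{\frac{1}{2}(\theta-\kappa)\b}$. Decoupling via \cite{GN10} is then only needed over the short intervals between consecutive return times, not over the full nucleation window; this is what makes the near-independence (Proposition~\ref{prp:properties}) go through. The resulting object is a birth--death process of essentially independent droplet histories, analysed via a toy Markov-chain model rather than a straight product of sub-box dynamics. Separately, the paper does not obtain the lower bound from \cite{BGdHNOS23} and a union bound over moderate sub-boxes; it proves it directly for all $\T$ (Section~\ref{sec:lowbound}) by a time-integral plus union bound over tiny $2\ell_c\times 2\ell_c$ boxes, using only the invariance of $\mu_{\cR}$ and an isoperimetric estimate.
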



\paragraph{$\bullet$ Tube of typical trajectories.}

We define {\it quasi-squares} as clusters of sizes $\ell_1\times\ell_2$ in the set
\[
\hbox{QS}=\{(\ell_1,\ell_2)\in\N^2: \ell_1\leq \ell_2\leq \ell_1+1\}
\]
and let $\lambda(\b)$ be an unbounded but slowly increasing function of $\beta$ satisfying
\begin{equation}
\label{def:lambda}
\lambda(\b)\log\lambda(\b)=o(\log\b), \qquad \beta \to \infty,
\end{equation}
e.g.\ $\lambda(\b)=\sqrt{\log\b}$. Subject to \eqref{metreg}, for $\D<\T<\Gamma-(2\Delta-U)$ and starting from the initial measure $\mu_{\cR}$, we will prove that with a probability that is exponentially close to $1$ the system nucleates a growing cluster that visits the full increasing sequence of quasi-squares, and after the exit from $\cR$ grows in the same way up to a $\sqrt{\lambda(\beta)/8}\times\sqrt{\lambda(\beta)/8}$-square. This square is reached after the exit from $\cR$ in a time that is exponentially smaller than the time needed to reach $\cR^c$. To formally state the latter, let $\theta=2\Delta-U-\gamma$, where
\begin{equation}\label{def:gamma}
\gamma = (\Delta - U) - (\ell_c - 2)(2U-\Delta) > 0.
\end{equation}
We denote by $\Lambda_{\cR^c}$ the box with volume $\ee^{\theta\beta}$ centered at the baricenter of the first cluster of volume $\ell_c(\ell_c+1)+3$. Note that the box $\Lambda_{\cR^c}$ is defined at time $\tau_{\cR^c}$. For $(\ell_1,\ell_2)\in\hbox{QS}$ with $\ell_1<\ell_c$, define
\[
\tau_{(\ell_1,\ell_2)}^{last}=\sup\{t\leq\tau_{\cR^c}\colon\, X(t) \hbox{ contains a } \ell_1\times\ell_2 \hbox{ quasi-square cluster in } \Lambda_{\cR^c}\},
\]
with the convention that $\sup\emptyset=-\infty$. Similarly, for $\ell_1\geq\ell_c$, define
\[
\tau_{(\ell_1,\ell_2)}=\inf\{t\geq\tau_{\cR^c}: X(t) \hbox{ contains a } \ell_1\times\ell_2 \hbox{ quasi-square cluster in } \Lambda_{\cR^c}\},
\]
with the convention that $\inf\emptyset=\infty$. Finally, for $\delta>0$, define the event
\[
\begin{array}{ll}
T_\delta 
&= \Bigl\{ \tau_{\cR^c}-\ee^{(\theta+\delta)\beta}< \tau_{(2,2)}^{last} 
< \tau_{(2,3)}^{last}< \ldots <\tau_{(\ell_c-1,\ell_c)}^{last} < \tau_{\cR^c}<\tau_{(\ell_c,\ell_c)} \\
&\qquad \qquad \qquad < \ldots < \tau_{\left(\sqrt{\lambda(\beta)/8},\sqrt{\lambda(\beta)/8}\right)}  
< \tau_{\cR^c}+\ee^{(2\Delta-U+\delta)\beta} < +\infty \Bigr\}.
\end{array}
\]

\begin{theorem}{\bf [Tube of typical trajectories]}
\label{thm:tube}
Subject to \eqref{metreg}, for any $\delta>0$,
\begin{equation}
	\lim_{\beta\to\infty}\frac{1}{\beta}\ln
	P_{\mu_{\cR}}\left( T_\delta^c \right)<0.
\end{equation}
\end{theorem}
In the proof of Theorems \ref{thm:escape} and \ref{thm:tube} the parameter $\Theta$ will play a crucial role. Indeed, in Section \ref{sec:upplarge} we will see that the analysis for {\it large volumes}, i.e., $\Theta>\theta$, can be reduced to that for {\it moderate volumes}, i.e., $\Theta\leq\theta$, by using their near-independence, where the parameter $\theta$ can be interpreted as follows. Let $r(\ell_1,\ell_2)$ be the \emph{resistance} of the $\ell_1\times \ell_2$ quasi-square with $1\leq \ell_1\leq \ell_2$ given by (see Fig.~\ref{fig:resistenza})
\begin{equation}
	\label{def:res}
	r(\ell_1,\ell_2) = \min\{(2U - \Delta) \ell_1 - U + 2\Delta - U, 2\Delta - U\}.
\end{equation}

\begin{figure}[htbp]
	\centering
	\includegraphics[width=0.8\textwidth]{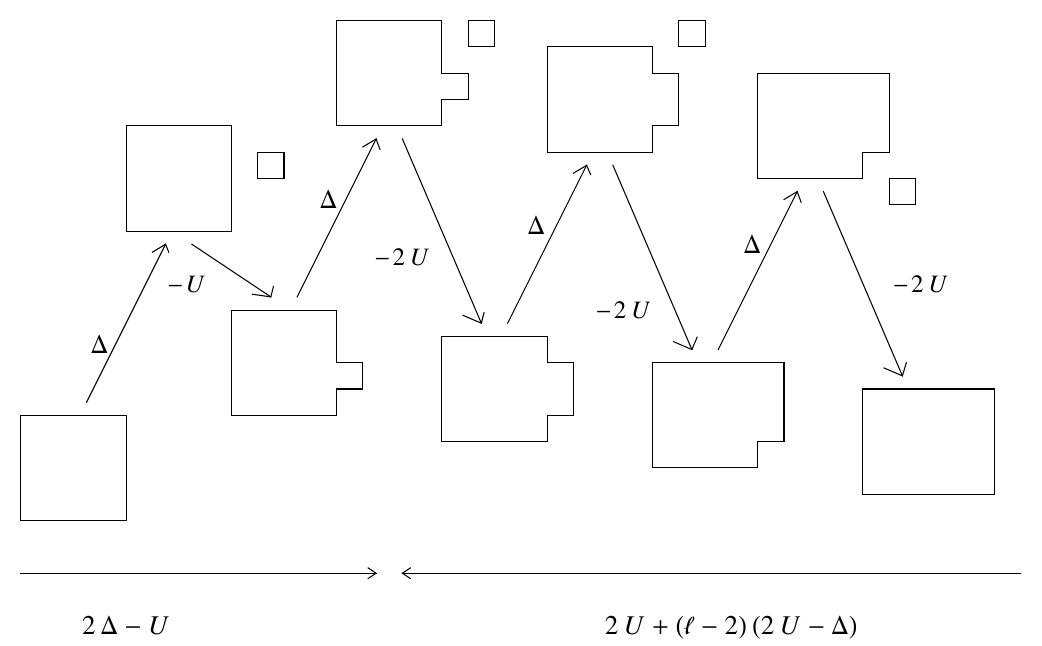}
	\caption{\small Cost of adding or removing a row of length $\ell$ in a finite volume. \normalsize}
	\label{fig:resistenza}
\end{figure}

\noindent
Note that $\theta$ can be viewed as the resistance of the largest subcritical quasi-square. Since this quasi-square has sizes $(\ell_c - 1) \times \ell_c$, we have $\theta = 2U + (\ell_c-3)(2U-\Delta)$, which can be written as $\theta=2\Delta-U-\gamma$, where $\gamma$ is defined in \eqref{def:gamma}.

\begin{remark}
{\rm With the techniques developed in \cite{BGdHNOS23} it is possible to show that, as in \cite{GOS04}, the growing cluster in Theorem~\ref{thm:tube} triggering the nucleation is {\it wandering}, i.e., when the cluster has size $\ell_1\times\ell_2$ with $|\ell_1-\ell_2| \leq 1$ it exits any finite box around it with a volume that does not depend on $\beta$ within a time of order $\ee^{r(\ell_1,\ell_2)\beta}$.}
\end{remark}


\subsection{Conclusion}

{\bf 1.}
Theorems~\ref{thm:escape}--\ref{thm:tube} provide a detailed description of the \emph{escape from metastability} for Kawasaki dynamics in moderate and large volumes, and concludes the work initiated in \cite{GdHNOS09} and \cite{BGdHNOS23}. The asymptotics of the nucleation time is identified on a time scale that is exponential in $\beta$ and depends on an {\it entropic factor} related to the size of the box: $\tau_{\cR^c}$ is typically of order $\ee^{\Gamma\beta}/|\Lambda_\beta|$. We find that the probability to deviate from the lower bound is exponentially small in $\beta$, while the probability to deviate from the upper bound is $\SES(\beta)$, where we write $\SES(\beta)$ for any function of $\beta$ that decays to zero faster than any exponential of $\beta$. This asymmetry is due to the fact that nucleation after a given time can only happen when an exponentially large number of particles does not manage to trigger the nucleation before that time.

\medskip\noindent
{\bf 2.}
Our theorems only concern the \emph{initial phase} of the nucleation, until the critical droplet grows into a droplet that is roughly $\sqrt{\lambda(\beta)}$ times the size of the critical droplet. They provide no information on what happens \emph{afterwards}, when the droplet grows even further and becomes macroscopically large. In that regime the gas around the droplet becomes depleted, smaller droplets move around and coalesce into larger droplets, etc. It remains a major challenge to describe what precisely happens in this regime, which lies \emph{beyond} metastability.    

\paragraph{Outline.}
The rest of the paper is organised as follows. In Section \ref{sec:notation} we recall some definitions and notations that will be used throughout the paper. In Section \ref{sec:lowbound} we provide the lower bound for the nucleation time for all $\D<\Theta<\Gamma-(2\D-U)$, while in Section \ref{sec:uppsmall} (respectively, Section \ref{sec:upplarge}) we provide the upper bound for the nucleation time and the tube of typical trajectories for all $\Delta<\Theta\leq\theta$ (respectively, $\theta<\Theta<\Gamma-(2\D-U)$). In Sections \ref{sec:lowbound} and \ref{sec:uppsmall} we use technical arguments that are explained in more detail in \cite{BGdHNOS23}, and focus on the relevant estimates only. The lower bound for the nucleation time can be obtained for arbitrarily large volumes. The upper bound is much harder.


\section{Definitions and notations}
\label{sec:notation}

In this section we introduce some definitions and notations that will be needed throughout the sequel. 

\begin{definition}
$\mbox{}$
{\rm 
\begin{itemize}
\item[1.] 
As in \cite{GdHNOS09}, \cite{BGdHNOS23}, $\alpha$ and $d$ are two positive parameters that can be chosen as small as desired, and $\lambda(\b)$ is an unbounded but slowly increasing function of $\beta$ that satisfies \eqref{def:lambda}. Moreover, $C^{\star}$ is a positive parameter that can be chosen as large as desired. Once chosen, $\alpha$, $d$, $\lambda$ and $C^{\star}$ are fixed. We write $O(\delta)$, $O(\alpha)$ and $O(d)$ for quantities with an absolute value that can be bounded by a constant times $|\delta|$, $|\alpha|$ and $|d|$, for small enough values of these parameters. We write $O(\delta,\alpha,d)$ for the sum of three such quantities.
\item[2.] 
We use short-hand notation for a few quantities that depend on the previous parameters $\Delta \in (\tfrac32 U,2U)$, $\Theta \in (\Delta,\Gamma-(2\Delta-U))$ and the new parameters $\alpha$, $d$. Recall that
\begin{equation}
\label{parameters}
\theta=2\Delta-U-\gamma, \quad \gamma=\Delta-U-(\ell_c-2)\epsilon \quad \hbox{and} \quad \ell_c=\Big\lceil\frac{U}{\epsilon}\Big\rceil, \hbox{ with } \epsilon=2U-\Delta.
\end{equation}
We set
$$
D=U+d, \qquad \Delta^+=\Delta+\alpha,
$$
and abbreviate
\begin{equation}
\label{Spardef}
S=\dfrac{4\Delta-\theta}{3}-\alpha.
\end{equation}
For $C>0$, write $T_C$ for the time scale $T_C=\ee^{C\beta}$.
\item[3.]
For convenience we identify a configuration $\eta\in\cX_\beta$ with its support $\hbox{supp}(\eta)=\{z\in\Lambda_\beta: \eta(z)=1\}$ and write $z\in\eta$ to indicate that $\eta$ has a particle at $z$. For $\h\in\cX_\beta$, denote by $\h^{cl}$ the {\it clusterised part of} $\h$:
\begin{equation}
\label{def:cluster}
\h^{cl} = \{z\in\h\colon\,\parallel z-z'\parallel =1 \hbox{ for some } z'\in\h\}.
\end{equation}
Call {\it clusters of} $\h$ the connected components of the graph drawn on $\h^{cl}$ obtained by connecting nearest-neighbour sites that are not a singleton.
\item[4.]
Denote by $B(z,r)$, $z\in\R^2$, $r>0$, the open ball on $\R^2$  with center $z$ and radius $r$ in the $\ell_{\infty}$-norm defined by 
\begin{equation}
\label{def:norm}
||\cdot||_{\infty}\colon\, (x,y)\in\R^2 \mapsto |x|\vee|y|.
\end{equation}
We denote by $\dist(\cdot,\cdot)$ the distance induced by this norm. 
\item[5.] 
The closure of $A\subset\R^2$ is denoted by $\overline{A}$. For $A\subset\Z^2$ and $s>0$, put
\begin{equation}\label{eq:orlo}
[A,s] = \bigcup_{z\in A}\overline{B(z,\ee^{\frac{s}{2}\beta})}\cap\Z^2.
\end{equation}
Call $A$ a rectangle on $\Z^2$ if there are $a,b,c,d\in\R$ such that
\begin{equation}
A = [a,b] \times [c,d] \cap\Z^2.
\end{equation}
Write $\hbox{RC(A)}$ to denote the intersection of all the rectangles on $\Z^2$ containing $A$, called the {\it circumscribed rectangle} of $A$. Denote by ${\mathscr{R}}$ the set of all finite collections of rectangles on $\Z^2$.
\item[7.] 
Given $\sigma\geq0$ and $\bar S=\{R_1,\ldots,R_{|S|}\}\in{\mathscr{R}}$, two rectangles $R$ and $R'$ in $\bar S$ are said to be in the same equivalence class if there exists a finite sequence $R_1,\ldots,R_k$ of rectangles in $\bar S$ such that
$$
R=R_1, \quad R'=R_k, \quad  \dist(R_j,R_{j+1})<\sigma \quad \forall\,\, 1 \leq j < k.
$$
Let $C$ be the set of equivalent classes, define the map
$$
\bar g_\sigma\colon\, \bar S\in{\mathscr{R}} \mapsto
\Bigg\{\hbox{RC}\Bigg(\bigcup_{j \in c}R_j\Bigg)\Bigg\}_{c\in C}\in{\mathscr{R}},
$$
and let $(\bar g_\sigma^{(k)})_{k\in\N_0}\in {\mathscr{R}}^{\mathbb{N}}$ be the sequence of iterates of $\bar g_\sigma$. Define
\begin{equation}
\label{def:mapg}
g_\sigma(\bar S)= \lim_{k\rightarrow\infty} \bar g_\sigma^{(k)}(\bar S).
\end{equation}
As discussed in \cite{G09}, the sequence $(\bar g_\sigma^{(k)}(\bar S))_{k\in\N_0}$ ends up being a constant, so the limit is well defined.
\end{itemize}
}\hfill$\spadesuit$
\end{definition}

\begin{figure}
	\centering
	\begin{tikzpicture}[scale=0.25,transform shape]		
		\draw [gray, fill=gray, semithick, even odd rule] 
		(20,10) rectangle (37,20) (23,12) rectangle (34,18);
		\draw [gray, fill=gray] (29,14) rectangle (30,15);
		\node at (30.7,15.2){\huge{10}};
		\draw [gray, fill=gray] (19,9) rectangle (20,10);
		\node at (20.5,8.8){\huge{15}};
		\draw [gray, fill=gray] (18,8) rectangle (19,9);
		\node at (19.5,7.8){\huge{14}};
		\draw [gray, fill=gray] (17,7) rectangle (18,8);
		\node at (16.5,8.2){\huge{13}};
		\draw [gray, fill=gray] (16,6) rectangle (17,7);
		\node at (15.5,7.2){\huge{12}};
		\draw [gray, fill=gray] (15,5) rectangle (16,6);
		\node at (14.5,6.2){\huge{11}};
		\draw [gray, fill=gray, semithick] 
		(15,5) rectangle (11,-3);
		\draw [gray, fill=gray, semithick] 
		(11,5) rectangle (10,6);
		\node at (9.7,4.7){\huge{4}};
		\draw [gray, fill=gray, semithick] 
		(10,6) rectangle (9,7);
		\node at (10.3,6.4){\huge{5}};
		\draw [gray, fill=gray, semithick] 
		(11,7) rectangle (10,8);
		\node at (11.3,8.4){\huge{2}};
		\draw [gray, fill=gray, semithick] 
		(9,7) rectangle (8,8);
		\node at (7.7,8.4){\huge{1}};
		\draw [gray, fill=gray, semithick] 
		(9,5) rectangle (8,6);
		\node at (7.7,4.8){\huge{3}};
		\draw [gray, fill=gray, semithick] 
		(15,-3) rectangle (16,-4);
		\node at (15.8,-2.5){\huge{9}};
		\draw [gray, fill=gray, semithick] 
		(15,-5) rectangle (16,-6);
		\node at (14.5,-5){\huge{8}};
		\draw [gray, fill=gray, semithick] 
		(15,-6) rectangle (10,-9);
		\draw [gray, fill=gray, semithick] 
		(18,-6) rectangle (21,-9);
		\draw [gray, fill=gray, semithick] 
		(18,-6) rectangle (17,-5);
		\node at (16.8,-6.5){\huge{7}};
		\draw [gray, fill=gray, semithick] 
		(18,-4) rectangle (17,-3);
		\node at (18.5,-3.6){\huge{6}};
		\draw [gray, fill=gray, semithick] 
		(18,-3) rectangle (23,0);
		\draw [gray, fill=gray, semithick] 
		(32,-1) rectangle (33,0);
		\node at (33.7,0.5){\huge{16}};
	\end{tikzpicture}
	
	\vskip 0 cm
	\caption{\small Each particle is represented by a unit square. Particles 1--5 and 16 are free, particles 6--9, 10, 11--15 are not free. All other particles are clusterised. \normalsize}
	\label{fig:freeparticle}
\end{figure}

As in \cite{GdHNOS09}, \cite{BGdHNOS23}, the notion of \emph{active} and \emph{sleeping} particles will be crucial. For the precise definition we refer to \cite[Section 4.3.1]{BGdHNOS23} and \cite[Section 2.2]{GdHNOS09}. The division of particles into active and sleeping is related to the notion of free particles. Intuitively, a particle is {\it free} if it does not belong to a cluster and can be moved to infinity without clusterisation, i.e., by moving non-clusterised particles only (see Figure \ref{fig:freeparticle}). For $t>\ee^{D\beta}$, a particle is said to be {\it sleeping} at time $t$ if it was not free during the time interval $[t-\ee^{D\beta},t]$. Non-sleeping particles are called {\it active}. Note that being active or sleeping depends on the history of the particle.

In \cite[Definition 2.3]{BGdHNOS23} we introduced a subset of configurations $\cX^*\subset\cX_\beta$ satisfying certain regularity properties. We referred to this subset as the {\it typical environment}.

\begin{definition}
\label{def:recurrence}
{\rm For any time $t\geq0$, given a configuration $\eta_t=X(t)\in{\cal X}_{\beta}$ and the collection $\bar\Lambda(t) = (\bar\Lambda_i(t))_{1 \leq i \leq k(t)}$ of finite boxes in $\Lambda_\beta$ constructed in \cite[Definition 1.1]{BGdHNOS23}, we say that $\eta_t$ is 0-reducible (respectively, $U$-reducible) if for some $i$ the local energy $\bar{H}_i(\bar\eta_i)$ given in \cite[Section 2.3]{BGdHNOS23} can be reduced along the dynamics with constant $\bar\Lambda(t)$ without exceeding the energy level $\bar H_{i}(\bar\eta_i)+0$ (respectively, $\bar H_{i}(\bar\eta_i)+U$). If $\eta_t$ is not $0$-reducible or $U$-reducible, then we say that $\eta_t$ is $0$-irreducible or $U$-irreducible, respectively. We define
$$
\begin{array}{ll}
{\cal X}_0
&=\{\eta_t\in{\cal X}^*\colon\,\eta_t \hbox{ is $0$-irreducible}\}, \\[0.2cm]
{\cal X}_U
&=\{\eta_t\in{\cal X}_0\colon\,\eta_t \hbox{ is $U$-irreducible}\}, \\[0.2cm]
{\cal X}_D 
&=\{\eta_t\in{\cal X}_U\colon\,\hbox{all the particles in } \Lambda(t) \hbox{ are sleeping}\}, \\[0.2cm]
{\cal X}_S
&=\{\eta_t\in{\cal X}_D\colon\,\hbox{each box of volume } \ee^{S\beta} \hbox{ contains three active particles at most}\}, \\[0.2cm]
{\cal X}_{\Delta^+}
&=\left\{\eta_t\in{\cal X}_S\colon\,
\begin{array}{ll}
\bar\eta_t \hbox{ is a union of at most } \lambda(\beta) \hbox{ quasi-squares with} \\
\hbox{no particle inside } \bigcup_i[\bar\Lambda_i(t),\Delta-\alpha] 
\hbox{ except for those} \\
\hbox{in the quasi-squares, one for each local box } \bar\Lambda_i(t) 
\end{array}
\right\}, \vspace{0.2cm} \\
{\cal X}_E
&=\{\eta\in{\cal X}_{\Delta^+}\colon\, \eta \hbox{ has no quasi-square}\},
\end{array}
$$
where $[\bar\Lambda_i(t),\Delta-\alpha]$ are the boxes of volume $\ee^{(\Delta-\alpha)\beta}$ with the same center as $\bar\Lambda_i(t)$.}\hfill$\spadesuit$
\end{definition}
Define the set
\begin{equation}
\label{def:R'}
{\cal R}':=\left\{\eta\in{\cal X}_\beta\colon\,
\begin{array}{ll}
\hbox{all clusters of } \eta \hbox{ have volume at most } \ell_c(\ell_c-1)+2 \\
\hbox{except for at most one cluster with volume less than } \tfrac18\lambda(\beta)
\end{array}
\right \},
\end{equation}
where $\lambda(\beta)$ satisfies \eqref{def:lambda}. For $C^{\star}>0$ large enough, our theorem about the tube of typical trajectories will hold up to time $T^{\star}$ defined as
\begin{equation}
\label{def:Tstella}
T^{\star} = \ee^{C^{\star}\beta}\wedge\min\{t \geq 0\colon\, X(t)\notin{\cal R'}\}.
\end{equation}
In \cite[Proposition 2.6]{BGdHNOS23} we showed that if our system is started from the restricted ensemble $\mu_{{\cal R}}$, then with probability $\SES$ it escapes from $\cX^*$ within time $T^{\star}$. Thus, effectively our system is confined to $\cX^*$, which is needed for certain arguments later on.


\section{Lower bound}
\label{sec:lowbound}

In this section we prove the lower bound for the time $\tau_{\cR^c}$ stated in Theorem \ref{thm:escape} for any $\Delta<\Theta<\Gamma-(2\Delta-U)$. 

Let $\Pi^k=(\Pi_i^k)_{i<n}$, with $k<4$ and $n=|\Lambda_\beta|/(2\ell_c)^2$, be the four partitions of $\Lambda_\beta$ into boxes $\Lambda_i$ with side length $2\ell_c$ centered on a square grid of side length $\ell_c$. Given such a partition $\Pi^k$ and a configuration $\eta\in\cX_\beta$, for $i<n$ define $\eta^{k,\mathrm{in}}_i$ as the configuration given by all the clusters in $\eta$ having a non-empty intersection with $\Lambda_i$ and all the free particles in $\Lambda_i$. Note that the support of $\eta^{k,\mathrm{in}}_i$ may not be completely contained in $\Lambda_i$. Indeed, clusters of $\eta$ may intersect $\Lambda_i$, but not be contained inside. Moreover, define $\eta_i^{k,\mathrm{out}}=\eta - \eta^{k,\mathrm{in}}_i$. Note that, since $\eta_i^{k,\mathrm{out}}$ and $\eta_i^{k,\mathrm{in}}$ have disjoint support, $H(\eta)=H(\eta_i^{k,\mathrm{in}})+H(\eta_i^{k,\mathrm{out}})$. Define
\[
\begin{aligned}
A_i^k &= \Big\{ \eta\in\cR\colon\, |\eta_i^{k,\mathrm{in}}|\geq\ell_c(\ell_c-1)+2 \hbox{ and }\\ 
&\qquad \eta_i^{k,\mathrm{in}} \hbox{ has no cluster with more than } \ell_c(\ell_c-1)+1 \hbox{ particles} \Big\}.
\end{aligned}
\]
If $X$ exits from $\cR$ within time $t$, then there exists $0\leq s\leq t$ such that $X(s)\in\bigcup_{k<4}\bigcup_{i<n} A_i^k$. Considering the Poisson process along which the process $X$ is updated, and letting $T_-=\ee^{(\Gamma-\Theta-\delta)\beta}$ with $\delta>0$, we get
\[
P_{\mu_\cR} (\tau_{\cR^c}<T_-) \leq \SES + \sum_{k<4}\sum_{t<2T_-}\sum_{i<n} \mu_{\cR}(A_i^k).
\]
Note that
\[
\mu_{\cR}(A_i^k) = \dfrac{\displaystyle\sum_{\eta^{\mathrm{in}}\in A_i^{k,\mathrm{in}}} \ee^{-\beta[H(\eta^{\mathrm{in}})+\Delta|\eta^{\mathrm{in}}|]}\displaystyle\sum_{\eta^{\mathrm{out}}\sim \eta^{\mathrm{in}}} \ee^{-\beta[H(\eta^{\mathrm{in}})+\Delta|\eta^{\mathrm{in}}|]}}{\displaystyle\sum_{\eta^{\mathrm{in}}\in \cR_i^{k,\mathrm{in}}} \ee^{-\beta[H(\eta^{\mathrm{in}})+\Delta|\eta^{\mathrm{in}}|]}\displaystyle\sum_{\eta^{\mathrm{out}}\sim \eta^{\mathrm{in}}} \ee^{-\beta[H(\eta^{\mathrm{in}})+\Delta|\eta^{\mathrm{in}}|]}}.
\]
By estimating from below the denominator with the term associated to the configuration $\eta^{\mathrm{in}}=\eta^0$, where $\eta^0$ is the configuration that is equal to zero everywhere, and noting that $H(\eta^{\mathrm{in}})+\Delta|\eta^{\mathrm{in}}|\geq\Gamma$ for any $\eta^{\mathrm{in}}\in A_i^{k,\mathrm{in}}$, we get
\[
\mu_{\cR}(A_i^k) \leq |A_i^{k,\mathrm{in}}| \ee^{-\beta\Gamma}\leq \ee^{-\beta\big(\Gamma-\tfrac\delta2\big)},
\]
which implies the desired lower bound.


\section{Upper bound and tube: moderate volume}
\label{sec:uppsmall}

In this section we prove the upper bound for the time $\tau_{\cR^c}$ in Theorem \ref{thm:escape} and identify the nucleation pattern stated in Theorem \ref{thm:tube}, both in the case of moderate volume, i.e., $\Delta<\Theta\leq\theta$.

For  Theorem~\ref{thm:escape} it is enough to provide an upper bound on the exit time from $\cR$, i.e., we will prove that
\begin{equation}
	\label{Ptarget}
	P_{\mu_{\cal R}}(\tau_{{\cal R}^c} > \ee^{(\Gamma-\Theta+\delta)\beta})=\SES.
\end{equation}
Recall \cite[Proposition 2.6]{BGdHNOS23}, which says that
\begin{equation}\label{eq:step1}
P_{\mu_\cR}(\tau_{\cX_\beta\setminus\cX^*}\leq T^\star) = \SES,
\end{equation}
so that we can consider our dynamics confined to $\cX^*$. We will establish in Section \ref{sec:recvacuum} the recurrence property to the set $\cX_E\cup\cR^c$,
\begin{equation}
\label{eq:step2}
\sup_{\eta\in\cX^*}P_\eta(\tau_{\cX_E\cup\cR^c}\wedge\tau_{\cX_\beta\setminus\cX^*}>T_\theta\ee^{\delta\beta})=\SES.
\end{equation}
In Section \ref{sec:nuclevent} we will construct a nucleation event starting from $\cX_E$ to show that
\begin{equation}
\label{eq:step3}
\inf_{\eta\in\cX^*}P_\eta\left(\tau_{\cR^c}\wedge\tau_{\cX_\beta\setminus\cX^*}\leq T_\theta\ee^{\tfrac\delta3 \beta}\right) \geq \ee^{(\Theta-\Gamma)\beta} \ee^{\theta\beta} \ee^{-\tfrac\delta3 \beta}.
\end{equation}
By dividing the time interval $[0,\ee^{(\Gamma-\Theta)\beta} \ee^{\delta\beta}]$ into $\ee^{(\Gamma-\Theta-\theta)\beta} \ee^{\frac{2}{3}\delta\beta}$ intervals of length $T_\theta \ee^{\frac{\delta}{3}\beta}$, and using \eqref{eq:step1}, \eqref{eq:step2} and \eqref{eq:step3}, we get
\[
P_{\mu_{\cR}}(\tau_{\cR^c}>\ee^{(\Gamma-\Theta)\beta}\ee^{\delta\beta}) \leq \left( 1 - \ee^{(\Theta-\Gamma)\beta} \ee^{\theta\beta} \ee^{-\tfrac\delta3 \beta} - \SES \right)^{\ee^{(\Gamma-\Theta-\theta)\beta}\ee^{\tfrac23 \delta\beta}} = \SES.
\]

The proof of Theorem \ref{thm:tube} will be given in Sections \ref{sec:nocoal}--\ref{sec:suptube}. We first rule out the possibility of an exit from $\cR$ by coalescence  in Section \ref{sec:nocoal}. We subsequently construct the subcritical and the supercritical tube of typical trajectories in Sections \ref{sec:subtube} and \ref{sec:suptube} respectively,
which give us the complete geometrical description of the tube of typical paths stated in Theorem \ref{thm:tube} for $\Delta<\Theta\leq\theta$.


\subsection{Recurrence to vacuum}
\label{sec:recvacuum}

Here we show recurrence to the set $\cX_E \cup \cR^c$ within time $T_\theta\ee^{\delta\beta}$, where $T_{\theta} = \ee^{\theta\beta}$,
namely, we prove \eqref{eq:step2}. To this end, divide the time interval $[0, T_\theta \ee^{\delta\beta}]$ into $\ee^{\frac{3}{4}\delta\beta}$
intervals $I_j$ of length $T_\theta e^{\frac{\delta}{4}\beta}$. We have
$$
\begin{array}{ll}
\displaystyle\sup_{\eta\in\cX^*}P_\eta(\tau_{\cX_E\cup{\cal R}^c}
\wedge\tau_{\cX_\beta\setminus\cX^*}>T_\theta \ee^{\delta\beta})
&\leq \displaystyle\prod_{j<\ee^{\frac{3}{4}\delta\beta}}
\sup_{\eta\in\cX^*} P_\eta(\tau_{\cX_E\cup{\cal R}^c},
\tau_{\cX_\beta\setminus\cX^*}\notin I_j) \\
&=\Big(1-\displaystyle\inf_{\eta\in\cX^*}
P_\eta(\tau_{\cX_E\cup{\cal R}^c}
\wedge\tau_{\cX_\beta\setminus\cX^*}\leq T_\theta \ee^{\frac{\delta}{4}\beta})\Big)
^{\ee^{\frac{3}{4}\delta\beta}},
\end{array}
$$
where we use the strong Markov property for the stopping time $\tau_{\cX_E\cup{\cal R}^c}$. It suffices to prove that
\begin{equation}
\label{eq:rec2}
\inf_{\eta\in\cX^*}
P_\eta(\tau_{\cX_E\cup{\cal R}^c}
\wedge\tau_{\cX_\beta\setminus\cX^*}\leq T_\theta \ee^{\frac{\delta}{4}\beta}) \geq \ee^{-\frac{\delta}{4}\beta}.
\end{equation}
By the typical return time theorem (\cite[Theorem 1.5]{BGdHNOS23}), we know that the dynamics reaches the set $\cX_{\Delta^+}$ with probability $1-\SES$ within time $\ee^{(\Delta+\alpha+\delta)\beta}$ without exiting the environment $\cX^*$. If the configuration $X(\tau_{\cX_{\Delta^+}})$
contains a supercritical quasi square, namely, a $\ell_1\times\ell_2$ quasi-square with $\ell_1\geq\ell_c$, then the dynamics is already in $\cR^c$
and the event in \eqref{eq:rec2} is realized. Otherwise, all the quasi-squares in $X(\tau_{\cX_{\Delta^+}})$ are subcritical. Denote by $\ell_1$, $\ell_2$ the sizes of the smallest quasi-square in $X(\tau_{\cX_{\Delta^+}})$. By the typical update time theorem (\cite[Theorem 1.6]{BGdHNOS23}), we know that with probability $1-\SES$ the projected dynamics mantains the same quasi-square dimensions up to a coalescence event through successive visits in $\cX_{\Delta^+}$ within time $\ee^{(r(\ell_1,\ell_2)+\delta)\beta}\leq T_\theta \ee^{\delta\beta}$.
Therefore, by the typical transition theorem (\cite[Theorem 1.8]{BGdHNOS23}), we know that with probability at least $1-\ee^{-\delta\beta}$ the smallest quasi-square either decays into a smaller quasi-square or a {\it coalescence occurs at time $t$}, i.e.,  there exist two sleeping particles in different local boxes at time $t^-$ that are in the same local box at time $t$. In the first case, the volume of the droplets decreases. In the second case, two local boxes become too close to each other at time $t^-$, so that there is a non negligible probability that a coalescence between clusters appears. Thus, the number of droplets decreases. By iterating this argument for a non-exponential number of times, since the dynamics is in $\cX^*$ and hence the number of clusters is at most $\lambda(\beta)$, from \cite[Theorem 1.5]{BGdHNOS23} we obtain that the dynamics reaches the set $\cX_E \cup \cR^c$ within time $T_\theta\ee^{\delta\beta}$ with probability at least $\ee^{-\frac{\delta}{4}\beta}$, which proves \eqref{eq:rec2}.


\subsection{Exit from the set of subcritical configurations}
\label{sec:nuclevent}

Here we show that
\[
\inf_{\eta\in\cX_E}P_\eta\left( \tau_{\cR^c}\wedge\tau_{\cX_\beta\setminus\cX^*} \leq T_\theta \ee^{\tfrac\delta3 \beta} \right) \geq \ee^{(\Theta-\Gamma)\beta} \ee^{\theta\beta} \ee^{-\tfrac\delta3 \beta}.
\]
We estimate from below the probability that, starting from $\cX_E$, the system nucleates a single $2\times2$ square droplet anywhere in $\Lambda_\beta$ within time $T_\theta \ee^{\tfrac\delta6 \beta}$, and then, along the return times in $\cX_{\Delta^+}$, follows a sequence of quasi-squares of dimension $2\times3$, $3\times3, \ldots,(\ell_c-1)\times\ell_c$, waiting at most time $\ee^{r(\ell_1,\ell_2)\beta}\ee^{\tfrac\delta6 \beta}$ to go from an $\ell_1\times\ell_2$ quasi-square to the next one. By using the typical update time theorem \cite[Theorem 1.6]{BGdHNOS23} and the typical transition theorem \cite[Theorem 1.8]{BGdHNOS23}, we show that the probability to nucleate within time $T_\theta \ee^{\tfrac\delta6\beta}$ a $2\times2$ quasi-square anywhere in $\Lambda_\beta$ can be estimated from below by
\[
p_0 \geq \ee^{-r(0,0)\beta} \ee^{\theta\beta} \ee^{-\tfrac\delta6\beta},
\]
where $r(0,0)=4\Delta-2U-\Theta$. By using the typical update time theorem \cite[Theorem 1.6]{BGdHNOS23} and the typical transition theorem \cite[Theorem 1.9]{BGdHNOS23}, we show that the probability that a $\ell_1\times\ell_2$ quasi-square grows within time $\ee^{r(\ell_1,\ell_2)\beta} \ee^{\tfrac\delta6\beta}$ can be estimated from below by
\[
p_{(\ell_1,\ell_2)} \geq \ee^{-(2\Delta-U-r(\ell_1,\ell_2))\beta} \ee^{-\frac{\delta}{6\cdot 2^{\ell_1+\ell_2}}\beta}.
\]
Thus, we get
\[
\begin{array}{ll}
\displaystyle\inf_{\eta\in\cX_E}P_\eta\left( \tau_{\cR^c}\wedge\tau_{\cX_\beta\setminus\cX^*} \leq T_\theta \ee^{\tfrac\delta3 \beta} \right) &\geq  p_0 \displaystyle\prod_{\ell_1=2}^{\ell_c-1} \prod_{\ell_2=\ell_1}^{\ell_1+1} p_{(\ell_1,\ell_2)} \\
&\geq \ee^{-\left[ r(0,0) + \sum_{\ell_1=2}^{\ell_c-1}\sum_{\ell_2=\ell_1}^{\ell_1+1} (2\Delta-U-r(\ell_1,\ell_2)) \right]\beta} 
\ee^{-\tfrac\delta6 \left(\sum_{k\geq0}\frac{1}{2k}\right)\beta} \\
&\geq \ee^{(\Theta-\Gamma)\beta} \ee^{\theta\beta} \ee^{-\tfrac\delta3\beta}.
\end{array}
\]


\subsection{Exclude escape via coalescence}
\label{sec:nocoal}

In order to prove Theorem \ref{thm:tube} for $\Theta\leq\theta$, we first need to exclude the escape from $\cR$ by coalescence. 

We say that {\it the exit from $\cR$ occurs via coalescence} if, starting from a configuration with all the clusters having at most $\ell_c(\ell_c-1)+1$ particles each, a cluster with at least $\ell_c(\ell_c-1)+2$ particles is obtained after the move of one particle with the attachment of at least two {\it real} clusters (i.e., with at least two particles). We prove that
\begin{equation}
\label{eq:coal}
\begin{array}{ll}
&P_{\mu_{\cal R}} \left( \hbox{the exit from } \cR \hbox{ occurs via coalescence} \right) \\ &\quad\qquad\qquad\qquad\leq \SES + P_{\mu_{\cal R}} \left(\tau_{\cR^c}\leq \ee^{(\Gamma-\Theta+\delta)\beta}
\hbox{ and the exit occurs via coalescence} \right) \\
&\quad\qquad\qquad\qquad\leq \ee^{-(\Delta-U)\beta}e^{3\delta\beta}.
\end{array}
\end{equation}
Using the notations introduced in Section \ref{sec:lowbound}, we note that, if $X$ exits from $\cR$ within time $t$, then there exists $0\leq s\leq t$ such that $X(s)\in\bigcup_{k<4}\bigcup_{i<n} (B_i^k \cup C_i^k)$, where $n=|\Lambda_\beta|/(2\ell_c)^2$, 
\[
\begin{aligned}
B_i^k &=\{ \eta\in\cR\colon\, |\eta_i^{k,\mathrm{in}}|\geq\ell_c(\ell_c-1)+3 \hbox{ and } \eta_i^{k,\mathrm{in}}\\ 
&\qquad \hbox{ has no cluster with more than } \ell_c(\ell_c-1)+1 \hbox{ particles} \}
\end{aligned}
\]
and
\[
\begin{array}{ll}
C_i^k=\{ \eta\in\cR\colon\, |\eta_i^{k,\mathrm{in}}|\geq\ell_c(\ell_c-1)+2, \, \eta_i^{k,\mathrm{in}} \hbox{ has no cluster with more than } \ell_c(\ell_c-1) \hbox{ particles}, \\
\qquad\quad \hbox{and it does not consist of only two quasi-squares of dimensions } 1\times2 \hbox{ and } (\ell_c-1)\times\ell_c \}.
\end{array}
\]
Indeed, note that the exit from $\cR$ via coalescence implies that either a cluster with $\ell_c(\ell_c-1)+1$ particles is involved in the move, or a cluster with $\ell_c(\ell_c-1)$ particles at most is. In the former case, since two real clusters need to attach, $|\eta_i^{k,\mathrm{in}}|\geq\ell_c(\ell_c-1)+3$ and therefore $\eta\in B_i^k$. In the latter case note that $\eta\in C_i^k$, where the subcase in which $\eta_i^{k,\mathrm{in}}$ contains only two quasi-squares of dimensions $1\times2$ and $(\ell_c-1)\times\ell_c$ is excluded, because it is geometrically impossible to attach two real clusters with a single move without the addition of another particle.

It is now enough to show that, for $\eta^{\mathrm{in}}\in B_i^{k,\mathrm{in}} \cup C_i^{k,\mathrm{in}}$, we have $H(\eta^{\mathrm{in}})+\Delta|\eta^{\mathrm{in}}|\geq \Gamma + (\Delta-U)$. This is straightforward in the case $\eta^{\mathrm{in}}\in B_i^{k,\mathrm{in}}$. When $\eta^{\mathrm{in}}\in C_i^{k,\mathrm{in}}\setminus B_i^{k,\mathrm{in}}$, so that $|\eta_i^{k,\mathrm{in}}|=\ell_c(\ell_c-1)+2$, we define the configuration $\bar\eta^{\mathrm{in}}$ as the one obtained after replacing each cluster in $\eta^{\mathrm{in}}$ by the smallest quasi-square having volume larger or equal than that of the cluster, in such a way the quasi-squares do not intersect. Note that $H(\eta^{\mathrm{in}})\geq H(\bar\eta^{\mathrm{in}})$ and $\bar\eta^{\mathrm{in}}$ contains subcritical quasi-squares only, i.e., the largest can be a $(\ell_c-1)\times\ell_c$ quasi-square. Let us denote by $Q_1,\ldots,Q_m$ the real quasi-squares in $\bar\eta^{\mathrm{in}}$, with $m\geq2$, and set $M=|Q_1|+\ldots+|Q_m|$. By using \cite[Corollary 6.18]{CN13}, we know that the perimeter of a cluster with volume $K$ is bounded from below by $4\sqrt{K}$. Thus, we deduce that the total perimeter of the $m$ disjoint clusters is at least $4(\sqrt{|Q_1|}+\ldots+\sqrt{|Q_m|})$. By direct computations we deduce that the minimum value of the perimeter is achieved, up to any permutation, for $|Q_1|=\ldots=|Q_{m-1}|=2$ and $|Q_m|=M-2(m-1)$, when $Q_1,\ldots,Q_{m-1}$ are a $1\times2$ quasi-square and $Q_m$ is a quasi-square with $M-2(m-1)$ particles. For $m=2$, note that $|Q_1|=2$ and $|Q_2|=\ell_c(\ell_c-1)$. If $\bar\eta^{\mathrm{in}}$ contains at least one free particle, then $H(\bar\eta^{\mathrm{in}})+\Delta|\bar\eta^{\mathrm{in}}|\geq\Gamma+\Delta>\Gamma+(\Delta-U)$. Otherwise, since the configurations in $C_i^k$ do not consist of only two quasi-squares of dimensions $1\times2$ and $(\ell_c-1)\times\ell_c$ even when $\bar\eta^{\mathrm{in}}$ does, we get that $H(\eta^{\mathrm{in}})+\Delta|\eta^{\mathrm{in}}|\geq \Gamma+U>\Gamma+(\Delta-U)$. For $m\geq3$, a similar argument applies. This concludes the proof of \eqref{eq:coal} after arguing as in Section \ref{sec:lowbound}.


\subsection{Subcritical tube of typical trajectories}
\label{sec:subtube}

Here we describe the uphill motion of the dynamics. From the typical transition theorem for subcritical quasi-squares (\cite[Theorem 1.8]{BGdHNOS23}), we know that a $\ell_1\times\ell_2$ quasi-square, with $2\leq\ell_1<\ell_c$, decays into a $(\ell_2-1)\times\ell_1$ quasi-square,
while a $2\times2$ square dissolves, up to time $T^{\star}$ defined in \eqref{def:Tstella}. To construct the subcritical part of the tube of typical trajectories, we use the argument carried out in \cite{S92} exploiting the reversibility of the dynamics. In particular, the first part of the typical tube of exiting trajectories is the time-reversal of a typical evolution of a shrinking subcritical quasi-square.  Since we have excluded the exit from $\cR$ via coalescence, we conclude that, with probability exponentially close to $1$ the system nucleates a cluster that visits the full increasing sequence of quasi-squares up to the $\ell_c\times\ell_c$ square. This also gives $\tau_{(2,2)}^{last}>\tau_{\cR^c}-\ee^{(\theta+\delta)\beta}$ with probability exponentially close to 1.


\subsection{Supercritical tube of typical trajectories}
\label{sec:suptube}

Here we describe the downhill motion of the dynamics. Using the subcritical part of the typical trajectories described in Section \ref{sec:subtube},
we know that with probability exponentially close to $1$ the dynamics nucleates a wandering cluster that visits all the quasi-squares up to the $\ell_c\times\ell_c$ square. Afterwards, by the typical transition theorem for supercritical quasi-squares (\cite[Theorem 1.8]{BGdHNOS23}), we know that the cluster grows in the same way up to time $T^{\star}$ and within time $\ee^{(2\Delta-U)\beta}\ee^{\delta\beta}$. By Theorem \ref{thm:escape} and the construction of the tube of typical trajectories given above, the time $T^{\star}$ coincides with the appearance time of a single large cluster of volume $\frac{1}{8}\lambda(\beta)$ with probability tending to $1$. Thus, our description of the supercritical part of the tube of typical trajectories holds up to the formation of a  $\sqrt{\lambda(\beta)/8}\times\sqrt{\lambda(\beta)/8}$ square.


\section{Upper bound and tube: large volume}
\label{sec:upplarge}

In this section we prove the upper bound for the time $\tau_{\cR^c}$ in Theorem \ref{thm:escape} and identify the nucleation pattern stated in Theorem \ref{thm:tube}, in the case of large volume, i.e., $\theta<\Theta<\Gamma-(2\Delta-U)$. In Section~\ref{sec:ind} we first construct a toy model for our droplet dynamics on moderately large boxes and identify the exit time from $\cR$ for this model. In Section~\ref{sec:nearind} we use the latter, together with the near-independence of the dynamics in different boxes of moderate volume, to estimate the nucleation time and identify the nucleation pattern.


\subsection{Independence of moderately large boxes: a toy model}
\label{sec:ind}

We can view a simple birth-death chain $\xi$ on 
	\[
	\hbox{QS}_0=\{(\ell_1,\ell_2)\in\hbox{QS}:2\leq\ell_1\leq\ell_2\leq\ell_c\}
	\]
as a {\it caricature} of the original process until the exit from $\cR$ in the case $\Theta\leq\theta$. Letting $s=(\ell_1,\ell_2)$, we set
\[
s+1=\left\{
\begin{array}{ll}
	(\ell_2,\ell_1+1), &\hbox{if } 2\leq\ell_1\leq\ell_2\leq\ell_1+1\leq\ell_c, \\
	(2,2), &\hbox{if } s=(0,0),
\end{array}
\right.
\]
and
\[
s-1=\left\{
\begin{array}{ll}
	(\ell_2-1,\ell_1), &\hbox{if } 2<\ell_2\leq\ell_1+1\leq\ell_c, \\
	(0,0), &\hbox{if } s=(2,2).
\end{array}
\right.
\]
Thus, we define $\xi$ as a birth-death process with transition probability
\[
p(s,s+1)=\left\{
\begin{array}{ll}
	\ee^{-(r(0,0)-\Delta)\beta}, &\hbox{if } s=(0,0), \\
	\ee^{-(\Delta-U)\beta}, &\hbox{if } s\neq(0,0),
\end{array}
\right.
\]
where $r(0,0)=4\Delta-2U-\Theta$ is the resistance of a configuration in $\cX_E$, and
\[
p(s,s-1)=\ee^{-(r(s)-\Delta)\beta}, \quad \hbox{if } s\neq(0,0),
\]
where $r(s)$ is defined in \eqref{def:res}. Set $p((\ell_c,\ell_c),(\ell_c,\ell_c))=1$. The process $\xi$ can be seen as a caricature of the original process because, when we look the process $X$ at the return times to $\cX_{\Delta^+}$ and keep track of the dimensions of the smallest quasi-square only, then $\xi$ does not take care of coalescent events and long jumps from the state $(2,2)$, but does keep the right scale of the exit time and the description of the subcritical tube of typical trajectories. In the case $\Theta>\theta$, we define another caricature $\zeta$, for which we will do our computations, before identifying the key properties that lead to the desired result. In Section \ref{sec:nearind} we show that the original model satisfies these properties, thereby obtaining Theorems \ref{thm:escape} and \ref{thm:tube}.
	
When going from $\Theta\leq\theta$ to $\Theta<\Gamma-(2\Delta-U)$, we face two difficulties:
\begin{itemize}
	\item There is no simple recurrence property to $\cX_{\Delta^+}$ on a time scale of order $T_{\Delta}$. By dividing $\Lambda_\beta$ into boxes of volume $\ee^{\theta\beta}$, we could show such a recurrence for the restrictions of $X$ to each box, but the associated return times are not simultaneous.
	\item Even when on the time scale $T_\Delta$ the dynamics on such fixed boxes of volume $\ee^{\theta\beta}$ are essentially independent, the correlations grow on longer time scales. To overcome this difficulty, we need to dynamically redefine volumes of order $\ee^{\theta\beta}$, which we will refer to as {\it clouds}, when we observe the starting of a nucleation event only. In the caricature $\xi$ such a starting event is represented by a jump from $(0,0)$ to $(2,2)$. The caricature $\zeta$ will be instead defined as an {\it aggregation} of birth-death chains on $\hbox{QS}_0$ absorbed in $(0,0)$ and $(\ell_c,\ell_c)$. We can see an absorption in $(0,0)$ as a failed nucleation attempt, an absorption in $(\ell_c,\ell_c)$ as a successful attempt, and the arrival of a new chain as the starting of such an attempt.
\end{itemize}


\paragraph{Toy model $\zeta$.} 

$\zeta$ is a Markov chain on $\cup_{k\geq0}\hbox{QS}_0^k$. If $\zeta_n\in\hbox{QS}_0^k$, then we write $|\zeta_n|=k$. If $\zeta_n=(\xi_{i,n})_{i<k}$, with $k=|\zeta_n|$, then we have $|\zeta_{n+1}|=|\zeta_n|+A$, where $A$ is a Poisson random variable with mean $a(\beta)=\ee^{\Theta\beta}\ee^{-(4\Delta-2U)\beta}\ee^{\Delta\beta}$. For $k\leq i<k+A$, $\xi_{i,n+1}=(2,2)$. For $i<k$, $\xi_{i,n+1}$ is sampled according to $p(\xi_{i,n},\cdot)$ if $\xi_{i,n}\neq(0,0)$, while $\xi_{i,n+1}=(0,0)$ if $\xi_{i,n}=(0,0)$.
	
Note that, in the case $\Theta=\theta$, we have $a(\beta)=p((0,0),(2,2))$, while, in the case $\Theta>\theta$, $a(\beta)$ can be viewed as the expected nucleation rate of a $2\times2$ quasi-square anywhere in $\Lambda_\beta$ within time $T_\Delta$. We will study $\zeta$ up to the first absorbing time of one of its coordinates $\xi$, which we will refer to as {\it histories}, in the state $(\ell_c,\ell_c)$.

Starting from a list of dimensions $(\ell_{1,i},\ell_{2,i})_{i < k}$ with $2\leq\ell_{1,i}\leq\ell_{2,i}\leq\ell_{1,i}+1\leq\ell_c$ for any $i<k$, each history $\xi_i$ typically fails, i.e., has only a small probability to reach $(\ell_c,\ell_c)$. Each history $\xi_i$ of the process $\zeta$ is absorbed within time $\ee^{(\theta-\Delta)\beta}\ee^{\delta\beta}$ with probability $1-\SES$. Furthermore, the probability of being absorbed in $(\ell_c,\ell_c)$ within time $\ee^{(\theta-\Delta)\beta}\ee^{\delta\beta}$ is at least $\ee^{-(\Gamma-(4\Delta-2U))\beta}$. Indeed, 
\begin{equation}
\label{eq:absorption}
\begin{array}{ll}
&P_{(2,2)}\left(\tau_{(\ell_c,\ell_c)}<\tau_{(0,0)}, \tau_{(\ell_c,\ell_c)}<\ee^{(\theta-\Delta+\delta)\beta}\right) \\[0.2cm]
&\qquad = P_{(2,2)}\left(\tau_{(\ell_c,\ell_c)}<\tau_{(0,0)}, 
\tau_{(0,0)} \wedge \tau_{(\ell_c,\ell_c)}<\ee^{(\theta-\Delta+\delta)\beta}\right) \\[0.2cm]
&\qquad = P_{(2,2)}\left(\tau_{(\ell_c,\ell_c)}<\tau_{(0,0)}\right) 
- P_{(2,2)}\left(\tau_{(\ell_c,\ell_c)}<\tau_{(0,0)}, 
\tau_{(0,0)} \wedge \tau_{(\ell_c,\ell_c)}\geq\ee^{(\theta-\Delta+\delta)\beta}\right) \\[0.2cm]
&\qquad \geq \ee^{-(\Gamma-(4\Delta-2U))\beta}-\SES.
\end{array}
\end{equation}
Within $k$ steps of $\zeta$, with probability $1-\SES$ we have that at least $k a(\beta)\ee^{-\frac{\delta}{4}\beta}$ new histories $\xi_i$ are born. Thus, for $k\gg\ee^{(\theta-\Delta)\beta}$, the probability that no such chains $\xi_i$ manage to reach $(\ell_c,\ell_c)$ before failing is at most 
\[
\left(1-\ee^{-(\Gamma-(4\Delta-2U))\beta}\right)^{k a(\beta)\ee^{-(\delta/4)\beta}}=\SES 
\quad \hbox{if} \quad k\gg\ee^{(\Gamma-\Theta-\Delta)\beta}.
\]
Thus, the exit time from the analogous set $\cR$ has the order claimed in Theorem \ref{thm:escape} for this toy model, thanks to \cite[Theorem 1.5]{BGdHNOS23}. The three key properties of this toy model are the following:
\begin{enumerate}
\item[(1)] The evolutions of the histories $\xi_i$ are independent.
\item[(2)] For any $\delta>0$, the probability that a chain $\xi_i$ reaches $(\ell_c,\ell_c)$ within time $\ee^{(\theta-\Delta)\beta}\ee^{\delta\beta}$ is at least $\ee^{-(\Gamma-(4\Delta-2U))\beta}\ee^{\delta\beta}$.
\item[(3)] At each step a Poisson number with mean $a(\beta)$ of histories are started.
\end{enumerate}

In Section \ref{sec:nearind} we will show that for our original model we can define an aggregation of histories that is {\it close} to the toy model, i.e., up to negligible events properties (1)-(3) still hold. This will imply Theorem \ref{thm:escape}.

Theorem \ref{thm:tube} follows after using the same tools. As in Section \ref{sec:nocoal}, the absence of exit from $\cR$ via coalescence is a corollary of Theorem \ref{thm:escape}. We can again follow the argument in \cite{S92} to construct the subcritical tube of typical trajectories as in Section \ref{sec:subtube}. In this context we just need the analogue of property (1). With this property, the control of the supercritical tube of typical trajectories is obtained in the same manner.


\begin{figure}
	\centering
	\begin{tikzpicture}[scale=0.15,transform shape]
		\draw[fill=gray!40] (2,2) rectangle (3,3);
		\draw[fill=gray!40] (3,2) rectangle (4,3);
		\draw[fill=gray!40] (4,2) rectangle (5,3);
		\draw[fill=gray!40] (5,2) rectangle (6,3);
		\draw (6,2) rectangle (7,3);
		\draw (7,2) rectangle (8,3);
		\draw[fill=gray!40] (2,3) rectangle (3,4);
		\draw[fill=gray!40] (3,3) rectangle (4,4);
		\draw[fill=gray!40] (4,3) rectangle (5,4);
		\draw[fill=gray!40] (5,3) rectangle (6,4);
		\draw[fill=gray!40] (2,4) rectangle (3,5);
		\draw[fill=gray!40] (3,4) rectangle (4,5);
		\draw[fill=gray!40] (4,4) rectangle (5,5);
		\draw[fill=gray!40] (5,4) rectangle (6,5);
		\draw[fill=gray!40] (17,6) rectangle (18,7);
		\draw[fill=gray!40] (18,6) rectangle (19,7);
		\draw[fill=gray!40] (19,6) rectangle (20,7);
		\draw[fill=gray!40] (17,7) rectangle (18,8);
		\draw[fill=gray!40] (18,7) rectangle (19,8);
		\draw[fill=gray!40] (19,7) rectangle (20,8);
		\draw[fill=gray!40] (17,8) rectangle (18,9);
		\draw[fill=gray!40] (18,8) rectangle (19,9);
		\draw[fill=gray!40] (19,8) rectangle (20,9);
		\draw[fill=gray!40] (11,-6) rectangle (12,-5);
		\draw[fill=gray!40] (12,-6) rectangle (13,-5);
		\draw[fill=gray!40] (11,-5) rectangle (12,-4);
		\draw[fill=gray!40] (12,-5) rectangle (13,-4);
		\draw (12,0) rectangle (13,1);
		\draw (13,0) rectangle (14,1);
		\draw (7,9) rectangle (8,10);	
		\draw (40,5) rectangle (50,15);
	\draw[fill=gray!40] (42,7) rectangle (43,8);
	\draw[fill=gray!40] (43,7) rectangle (44,8);
	\draw[fill=gray!40] (44,7) rectangle (45,8);
	\draw[fill=gray!40] (45,7) rectangle (46,8);
	\draw[fill=gray!40] (46,7) rectangle (47,8);
	\draw[fill=gray!40] (42,8) rectangle (43,9);
	\draw[fill=gray!40] (43,8) rectangle (44,9);
	\draw[fill=gray!40] (44,8) rectangle (45,9);
	\draw[fill=gray!40] (45,8) rectangle (46,9);
	\draw[fill=gray!40] (46,8) rectangle (47,9);
	\draw[fill=gray!40] (42,9) rectangle (43,10);
	\draw[fill=gray!40] (43,9) rectangle (44,10);
	\draw[fill=gray!40] (44,9) rectangle (45,10);
	\draw[fill=gray!40] (45,9) rectangle (46,10);
	\draw[fill=gray!40] (46,9) rectangle (47,10);
	\draw (43,10) rectangle (44,11);
	\draw (43,11) rectangle (44,12);
	\draw (45,12) rectangle (46,13);
	
	\draw (34,0) rectangle (35,1);
	\draw (35,0) rectangle (36,1);
	
	\draw (30,-12) rectangle (31,-11);
	\draw (30,-11) rectangle (31,-10);
	\draw (30,-10) rectangle (31,-9);
	
		\draw (-1,-9) rectangle (23,14);
		\put(0,65){\large $C_0$};		
		
		
		\draw[fill=gray!40] (53,-12) rectangle (54,-11);
		\draw[fill=gray!40] (54,-12) rectangle (55,-11);
		\draw[fill=gray!40] (55,-12) rectangle (56,-11);
		\draw[fill=gray!40] (56,-12) rectangle (57,-11);
		\draw[fill=gray!40] (53,-11) rectangle (54,-10);
		\draw[fill=gray!40] (54,-11) rectangle (55,-10);
		\draw[fill=gray!40] (55,-11) rectangle (56,-10);
		\draw[fill=gray!40] (56,-11) rectangle (57,-10);
		\draw (53,-10) rectangle (54,-9);
		\draw (50,-15) rectangle (60,-6);
		
		\put(175,70){\large $C_1$};
		\put(215,-20){\large $C_2$};
		
	\end{tikzpicture}
	\caption{\small Construction of the clouds $(C_i)_{i<3}$ at time $t=0$. Note that $\dist(C_{i_1},C_{i_2})=\sqrt{T_{\theta}\ee^{-\kappa\beta}}$, with $\kappa>0$ small enough, for $i_1,i_2<3$ and $i_1\neq i_2$. 
		\normalsize}
	\label{fig:cloud}
\end{figure}
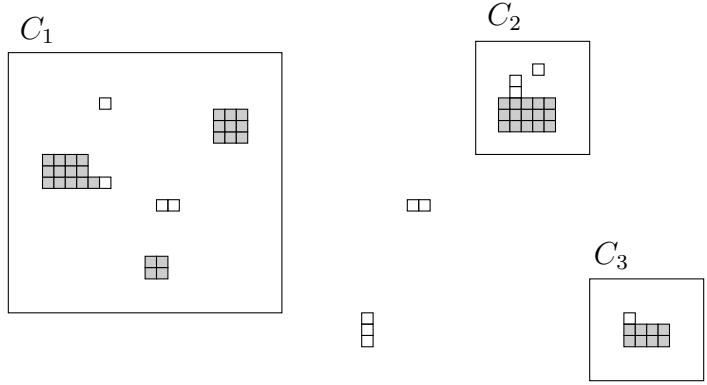


\subsection{Near-independence of moderately large boxes for original model}
\label{sec:nearind}

At time $t=0$, we declare as sleeping all the particles that belong to a quasi-square $(\ell_1,\ell_2)$ with $\ell_1\geq2$. We define $A(0)$ as the set of sites in $\Lambda_\beta$ that are occupied by a sleeping particle, and put, after recalling \eqref{eq:orlo}, for small enough $\kappa>0$
\begin{equation}\label{eq:cloud1}
\bar{S}(0) = [A(0),\theta-\kappa].
\end{equation}
At time $t=0$, we define the set of {\it clouds} $(C_i)_{i < N(0)}$ as $g_{r_0}(\bar S(0))$, which is a collection of rectangles at distance at least 
\[
r_0=\ee^{\frac{1}{2}(\theta-\kappa)\beta}
\]
from each other. (See Fig.~\ref{fig:cloud}.) Let $X_i$ be the restriction of the process $X$ to the cloud $C_i$. We couple $X_i$ with $X_i^{pb}$, which is a Kawasaki dynamics on $C_i$ with periodic boundary conditions and initial condition $X_i(0)$. To construct this coupling we assign colours to some particles, as follows. We paint {\it black} the particles of the process $X_i$ that visit $\Lambda_\beta\setminus C_i$ or share a cluster with a black particle. We paint {\it white} the particles of the process $X_i^{pb}$  that visit the internal boundary of $C_i$ or share a cluster with a white particle. All the other particles have no colour, both in $X_i$ and $X_i^{pb}$. Let $B(t)$ and $W(t)$ be the sets of sites in $C_i$ that were visited by a black and a white particle, respectively, within time $t$. \cite[Theorem 2 and Corollary 1.2]{GN10} provide a control on the growth of these two sets. Indeed, for any $t\geq \ee^{\Delta\beta}$ and $\delta>0$, with probability $1-\SES$ the two zones remain contained within a distance $t\ee^{\Delta\beta}\ee^{\delta\beta}$ from the boundary of $C_i$. We use the natural coupling that ensures equality of $X_i$ and $X_i^{pb}$ in $C_i\setminus(B(t)\cup W(t))$ for any $t\geq0$. With any $X_i^{pb}$ we associate the sequence $(\bar\tau_{k,i})_{k\geq0}$ of return times in $\cX_{\Delta^+}$ after seeing an active particle in the local boxes (see \cite[Eq.\ (1.22)-(1.23)]{BGdHNOS23}). We define the stopping times
\[
\tau_0^* = \displaystyle\max_{i < N(0)} \bar\tau_{0,i}
\]
and
\[
\tau_{0,i}^* = \displaystyle\max_{k\geq 0}\{\bar\tau_{k,i}\leq\tau_0^*\},
\]
i.e., the last time before $\tau_0^*$ when $X_i^{pb}$ recurs in $\cX_{\Delta^+}$. 

\begin{figure}
	\centering
	\begin{tikzpicture}[scale=0.35,transform shape]
		
		\draw[|->] (0,32) -- (30,32);
		\put(-25,315){\Large $\Xi_0$};
		\put(-3,307){$0$};
		\put(14,314.5){{\LARGE $\times$}};
		\draw (2.05,32) circle (0.53);
		\put(41,314.5){\LARGE $\times$};
		\put(63,314.5){\LARGE $\times$};
		\draw (6.95,32) circle (0.53);
		\put(120,314.5){\LARGE $\times$};
		\draw (12.7,32) circle (0.53);
		\draw (12.16,31.46) rectangle (13.22,32.52); 
		\draw (12.7,34) -- (12.7,18.2);
		\put(220,314.5){\LARGE $\times$};
		\draw (22.7,32) circle (0.53);
		\draw (22.2,31.46) rectangle (23.26,32.52); 
		\draw (22.75,34) -- (22.75,18.2);
				
		\draw[|->] (0,29) -- (30,29);
		\put(-25,285){\Large $\Xi_1$};
		\put(-3,277){$0$};
		\put(27,284.5){\LARGE $\times$};
		\draw (3.35,29) circle (0.53);
		\draw (2.82,28.46) rectangle (3.88,29.52); 
		
		\draw (3.4,34) -- (3.4,18.2);
		\put(28,170){$\tau_0^*$};
		\put(80,170){$\tau_1^*$};
		\put(122,170){$\tau_2^*$};
		\put(222,170){$\tau_3^*$};
		
		\put(52,284.5){\LARGE $\times$};
		\draw (5.87,29) circle (0.53);
		\put(83,284.5){\LARGE $\times$};
		\put(97,284.5){\LARGE $\times$};
		\draw (10.37,29) circle (0.53);
		\put(133,284.5){\LARGE $\times$};
		\put(153,284.5){\LARGE $\times$};
		\put(166,284.5){\LARGE $\times$};
		\put(203,284.5){\LARGE $\times$};
		\draw (21.07,29) circle (0.53);
		\put(250,284.5){\LARGE $\times$};
				
		\draw[|->] (0,26) -- (30,26);
		\put(-25,255){\Large $\Xi_2$};
		\put(-3,247){$0$};
		\put(18,254.5){\LARGE $\times$};
		\draw (2.45,26) circle (0.53);
		\put(78,254.5){\LARGE $\times$};
		\draw (8.5,26) circle (0.53);
		\draw (8,25.46) rectangle (9,26.52); 
		
		\draw (8.5,34) -- (8.5,18.2);
		
		\put(93,254.5){\LARGE $\times$};
		\put(109,254.5){\LARGE $\times$};
		\draw (11.57,26) circle (0.53);
		\put(145,254.5){\LARGE $\times$};
		\put(175,254.5){\LARGE $\times$};
		\put(210,254.5){\LARGE $\times$};
		\draw (21.72,26) circle (0.53);

		\draw[|->] (8.5,23) -- (30,23);
		\put(-25,225){\Large $\Xi_3$};
		\put(105,225){\LARGE $\times$};
		\draw (11.2,23) circle (0.53);
		\put(135,225){\LARGE $\times$};
		\put(175,225){\LARGE $\times$};
		\put(205,225){\LARGE $\times$};
		\draw (21.25,23) circle (0.53);
		\put(235,225){\LARGE $\times$};
		\put(260,225){\LARGE $\times$};

		\draw[|->] (8.5,20) -- (30,20);
		\put(-25,195){\Large $\Xi_4$};
		\put(97,195){\LARGE $\times$};
		\draw (10.35,20) circle (0.53);
		\put(127,195){\LARGE $\times$};
		\put(147,195){\LARGE $\times$};
		\put(158,195){\LARGE $\times$};
		\put(193,195){\LARGE $\times$};
		\draw (20,20) circle (0.53);
		
	\end{tikzpicture}
	\vskip 0.1cm
	\caption{\small We depict the construction of the times $(\tau^*_{j,i})_{j < 4}$ for $i<N(j)$, where $N(0)=N(1)=3$ and $N(2)=N(3)=5$. In particular, each cross represents a return time $\bar\tau_{j,i}$, where we highlight with a circle the times $\tau^*_{j,i}$ and with a square the times $\tau_j^*$. \normalsize}
	\label{fig:orizzonte}
\end{figure}
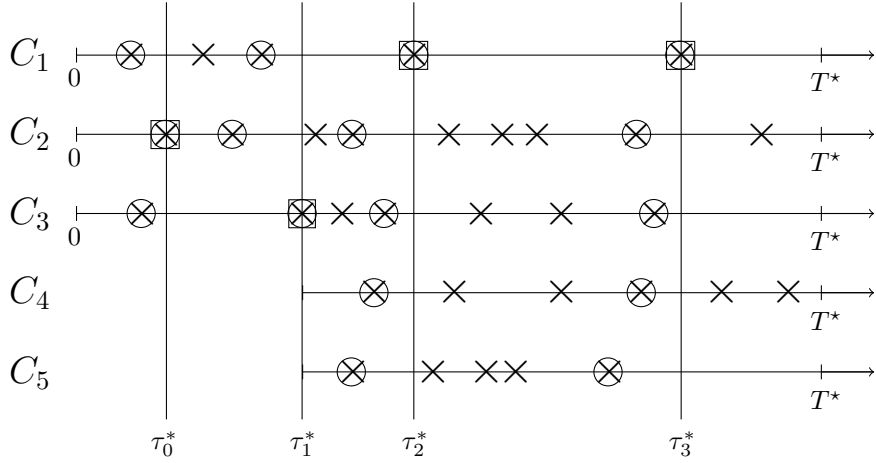

At time $t=\tau^*_{j-1}$, $j\geq1$, we iteratively construct the set of clouds $(C_i)_{i < N(j)}$ in a similar manner, where we now replace $r_0$ by
\[
r_j=\ee^{-\frac{1}{2}\frac{\kappa}{2^j}\beta}r_{j-1},
\]
i.e., replace $\theta-\kappa$ in \eqref{eq:cloud1} by $\theta-\sum_{i=0}^j \kappa/2^j$.
We then define the stopping times 
\[
\tau_j^* = \displaystyle\max_{i < N(j)}\displaystyle\min_{k \geq 0} \{\bar\tau_{k,i}\colon\, \bar\tau_{k,i} > \tau_{j-1}^*\}, 
\qquad j \geq 1,
\]
i.e., the first time after time $\tau_{j-1}^*$ when all the new $X_i^{pb}$ have recurred in $\cX_{\Delta^+}$, with initial conditions at time $\tau_{j-1}^*$. We also define
\[
\tau_{j,i}^* = \max_{k\geq0}\{\bar\tau_{k,i}\leq\tau_j^*\}.
\]
We refer the reader to Fig.~\ref{fig:orizzonte}, where the trajectories on each time interval $[\tau_{j-1}^*,\tau_j^*]$ have been reorganised according to histories $\Xi_i$, $i\geq0$, that we define below. Note that $\tau^*_{j,i}>\tau^*_{j-1}$ for any $j,i$. We then define the process
\[
Z_j=(X_i(\tau^*_{j,i}))_{i < N(j)}, \qquad 0\leq j\leq J^\star,
\]
where 
\[
J^\star=\max_{j\geq0}\{\tau^*_j\leq T^\star\}.
\]
It is important that we monitor the droplet dynamics at times $(\tau^*_{j,i})_{i < N(j)}$, $j\geq0$, rather than at times $(\bar\tau_{k,i})_{k\geq0, i < N(j)}$, $j\geq0$, because for the former all the clouds {\it recur almost simultaneously} to $\cX_{\Delta^+}$, i.e., within a time interval of length $T_{\Delta^+}$. This fact is needed to obtain the near independence of the droplet dynamics in these clouds. As we view $\zeta$ as an aggregation of independent histories $\xi_i$, which can either fail or succeed, we view $Z$ as a collection of {\it essentially} independent and {\it active} histories $\Xi_i$ of configurations in $\cX_{\Delta^+}$. We say that a history is active when it has not failed. $Z$ forgets about all failed histories. To identify a history $\Xi_i$ along the configurations $Z_0=(X_k(\tau^*_{0,k}))_{k < N(0)}$, $Z_1=(X_k(\tau^*_{1,k}))_{k < N(1)}, \ldots$, we have to
\begin{itemize}
\item[(i)] 
decide which is the successor $\Xi_{i,n+1}$ in $Z_{n+1}$ of $\Xi_{i,n}$ in $Z_n$ for any $n\geq0$, if such a successor exists;
\item[(ii)] 
identify the death, or failure, events of an history $\Xi_i$, i.e., the absence of a successor $\Xi_{i,n+1}$ in $Z_{n+1}$ of $\Xi_{i,n}$ in $Z_n$;
\item[(iii)] 
associate with the appearance of each $X_k(\tau_{n+1,k}^*)$ in $Z_{n+1}$ without predecessor in $Z_n$ the start of a new history $\Xi_i$.
\end{itemize}
To this end, in each configuration $\Xi_{i,n}=X_k(\tau_{n,k}^*)$ in $Z_n$ we consider one of the clusters containing at least one sleeping particle outside the black and the white zones (where the clusters coincide with the quasi-squares of $X_k^{pb}$), and we choose the one with the smallest dimensions. In this cluster we consider the particle that is asleep for a longer time (or one of these, if they are more than one). As long as this particle is still sleeping in one of the $X_{k'}(\tau_{n+1,k'}^*)$ in $Z_{n+1}$, we set $\Xi_{i,n+1}=X_{k'}(\tau_{n+1,k'}^*)$. Otherwise, we say that $\Xi_{i,n}$ has no successor, which corresponds to a death and failure of the history $\Xi_i$. We then associate with each $X_k(\tau_{n+1,k}^*)$ in $Z_{n+1}$ without predecessor the start of a new history $\Xi_i$.

\begin{remark}
{\rm The choice that $(r_k)_{k\geq0}$ is a decreasing sequence allows us to control the appearance of clusters outside or at the boundary of the clouds at the previous step without having an increasing number of clusters along a history $\Xi_i$. (It may however happen that the number of clusters decreases together with a birth event: this occurs whenever the clusters in the same cloud become clusters in different clouds at the next time step.) Then the number of clusters can increase along a history $\Xi_i$, but up to coalescence and with a negligible probability compared to that of growing a row on the smallest quasi-square.}
\end{remark}

\begin{remark}
\label{rmk:grow}
{\rm From \cite[Theorem 1.5]{BGdHNOS23}, up to an event of probability $\SES$, there are no more than $\ee^{(\alpha+\delta)\beta}$ return times in $\cX_{\Delta^+}$ of $X_k^{pb}$ between two successive configurations of the same history $\Xi_i$ for any $\delta>0$. This, together with \cite[Theorems 1.6, 1.8 and 1.9]{BGdHNOS23}, implies that, while in $\cR$ and unless a coalescence occurs, the smallest quasi-square $(\ell_1,\ell_2)$ along a history $\Xi_i$
\begin{itemize}
\item[(i)] typically resists for a time of order $\ee^{(r(\ell_1,\ell_2)-\Delta)\beta}$ if $\ell_1\geq2$;
\item[(ii)] typically either decays in a quasi-square $(\ell_2-1,\ell_1)$ if $\ell_2>2$ or dies if $(\ell_1,\ell_2)=(2,2)$;
\item[(iii)] grows into a quasi-square $(\ell_2,\ell_1+1)$ with a probability of order $\ee^{-((2\Delta-U)-\Delta)\beta}$.
\end{itemize}
This shows that each history typically dies or exits from $\cR$ within a time of order $\ee^{(\theta-\Delta)\beta}$ with probability $1-\SES$.}
\end{remark}

The proposition below is the core result of this section and ensures that the key properties of the toy model $\zeta$ also hold for our aggregation of histories $\Xi_i$.

\begin{proposition}
\label{prp:properties}
Up to an event of probability $\SES$:
\begin{enumerate}
\item[(1)] The evolutions of the histories $\Xi_i$ are independent.
\item[(2)] The probability that a history $\Xi_i$ reaches $(\ell_c,\ell_c)$ before dying within time $\ee^{(\theta-\Delta)\beta}\ee^{\delta\beta}$ is at least $\ee^{-(\Gamma-(4\Delta-2U))\beta}\ee^{\delta\beta}$ for any $\delta>0$.
\item[(3)] At each step at least a Poisson number with mean $a(\beta)$ of new histories start.
\end{enumerate}
\end{proposition}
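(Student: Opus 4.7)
My plan is to establish the three claims in order. Part~(1) is by far the most delicate; Parts~(2) and~(3) then reduce, thanks to the near-independence provided by Part~(1), to computations that closely mirror those of Sections~\ref{sec:nuclevent} and~\ref{sec:recvacuum} and of the toy model in Section~\ref{sec:ind}.

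For Part~(1), the idea is to exploit the coupling between $\Xi_i$ and the periodic-boundary process $\Xi_i^{pb}$. The processes $(\Xi_i^{pb})_{1\leq i\leq N(j)}$ live on disjoint clouds and use independent Poisson clocks, hence are \emph{genuinely} independent; it is therefore enough to show that, up to an event of probability $\SES$, each history $\Xi_i$ coincides with the corresponding $\Xi_i^{pb}$ on the portion of $C_i$ relevant for defining the history, throughout each interval $[\tau_{j-1}^*,\tau_j^*]$. By the front-propagation bound \cite[Theorem~2 and Corollary~1.2]{GN10} applied inside each cloud, the black and white zones $B(t)$ and $W(t)$ remain within a distance from $\partial C_i$ exponentially smaller than $r_j$, with probability $1-\SES$. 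Since the construction forces $r_j \geq r_0\,\ee^{-\kappa\beta/2}=\ee^{\frac12(\theta-2\kappa)\beta}$ and distinct clouds at step $j$ are at mutual distance $\geq r_j$, the sleeping cluster carrying each history and all particles it interacts with during returns to $\cX_{\Delta^+}$ lie outside $B(t)\cup W(t)$. Consequently each history is a functional of $\Xi_i^{pb}$ alone, which gives the claimed independence. The decreasing choice of $(r_j)$ is crucial here: it ensures that the reconstruction of clouds at each $\tau_{j-1}^*$ can absorb new small clusters appearing outside the previous clouds without ever merging widely separated clouds, which would otherwise destroy independence.

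For Part~(2), once histories have been decoupled we may reason on a single $\Xi_i^{pb}$. By Remark~\ref{rmk:grow} the smallest quasi-square along a history follows, up to $\SES$ events, the transition probabilities of the birth--death chain $\xi$ introduced in Section~\ref{sec:ind}. The probability of reaching $(\ell_c,\ell_c)$ from $(2,2)$ before death is then obtained by telescoping the one-step growth probabilities $\ee^{-((2\Delta-U)-r(\ell_1,\ell_2))\beta}$ along the canonical nucleation path, exactly as in the computation leading to~\eqref{eq:nuclx}. The product reduces to $\ee^{-(\Gamma-(4\Delta-2U))\beta}$ up to an $\ee^{O(\alpha,d)\beta}$ factor, which is absorbed in the prefactor $\ee^{\delta\beta}$ by a final choice of $\alpha$ and $d$ small enough.

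For Part~(3), during $[\tau_{k-1}^*,\tau_k^*]$ the dynamics outside the existing clouds relaxes typically to $\cX_E$, by the recurrence argument of Section~\ref{sec:recvacuum}. Around each site $x$ outside these clouds, the probability to nucleate a $2\times 2$ droplet in one return to $\cX_{\Delta^+}$ is, by the atypical transition theorem \cite[Theorem~1.9]{BGdHNOS23}, of order $\ee^{-(r(0,0)-\Delta)\beta}$. Multiplying by the available volume $|\Lambda_\beta|=\ee^{\Theta\beta}$ and using the same coupling argument as in Part~(1) to ensure that creation events at sites at mutual distance $\geq r_j$ are asymptotically independent, a Chen--Stein / Le~Cam Poisson approximation yields that the number of new clusters born between $\tau_{k-1}^*$ and $\tau_k^*$ is Poisson up to $\SES$, with mean $b(\beta)=\ee^{\Theta\beta}\ee^{-(4\Delta-2U)\beta}\ee^{\Delta\beta}$. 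The main obstacle throughout is Part~(1): one must simultaneously control the front propagation across cloud boundaries on the full time horizon $T^\star$ (potentially of order $\ee^{C^\star\beta}$), preserve the coupling through the successive cloud reconstructions at times $\tau_j^*$, and ensure that particles leaving a cloud never generate a black zone reaching a neighbouring cloud. Telescoping the \cite{GN10} velocity bound across the $J^\star$ intervals, combined with the geometric floor $r_0\ee^{-\kappa\beta/2}$ for the $r_j$'s, is precisely what makes this possible.
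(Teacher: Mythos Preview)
Your approach is correct and aligns with the paper's own proof, which is considerably terser: for Part~(1) the paper simply invokes \cite[Theorem~3]{GN10} together with the observation that $r_j\gg \ee^{\frac{\Delta}{2}\beta}\ee^{\delta\beta}$, for Part~(2) it refers to Remark~\ref{rmk:grow} and the computation~\eqref{eq:absorption}, and for Part~(3) it cites \cite[Lemma~4.1]{BGdHNOS23} to obtain Poisson domination of the number of new clouds. Your version unpacks essentially the same mechanism---the coupling with $\Xi_i^{pb}$ and the front-propagation control are precisely what the paper sets up in the paragraphs preceding the proposition---so the only cosmetic difference is that you route Part~(1) through \cite[Theorem~2 and Corollary~1.2]{GN10} plus the coupling, whereas the paper appeals directly to \cite[Theorem~3]{GN10}.
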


\begin{proof}
Property (1) is immediate from \cite[Theorem 3]{GN10}. Indeed, at each step $0\leq j\leq J^\star$, uniformly in $i_1,i_2 < N(j)$, the distance between the clouds $C_{i_1}$ and $C_{i_2}$ is at least $r_j\gg\ee^{\frac{\Delta}{2}\beta}\ee^{\delta\beta}$ for any $\delta>0$, which implies that the condition in \cite[Eq.\ (1.23)]{GN10} is satisfied. Property (2) readily follows from Remark \ref{rmk:grow} after arguing as in \eqref{eq:absorption}. Property (3) follows from the fact that the number of new clouds that are created at each step dominates a Poisson random variable with mean $a(\beta)=\ee^{\Theta\beta}\ee^{-(4\Delta-2U)\beta}\ee^{\Delta\beta}$. This is related to creation of new local boxes and can be obtained after arguing as in the proof of \cite[Lemma 4.1]{BGdHNOS23}.
\end{proof}

By Proposition \ref{prp:properties}, it is immediate that, for any $\delta>0$,
\[
P_{\mu_{\cR}}\left(\tau_{\cR^c}>\dfrac{\ee^{\Gamma\beta}}{|\Lambda_\beta|}\ee^{\delta\beta}\right)
\leq\left(1-\ee^{-(\Gamma-(4\Delta-2U))\beta}\ee^{-\delta\beta}\right)^{\ee^{(\Gamma-(3\Delta-2U))\beta}
\ee^{-\frac{\delta}{4}\beta}}=\SES.
\]
This, together with the argument developed in Section \ref{sec:lowbound}, implies Theorem \ref{thm:escape}. Once we provide the upper bound for the exit time from $\cR$, we can use the same argument as in Section \ref{sec:nocoal} to exclude the escape via coalescence in the case of large volumes, so that in this case Theorem \ref{thm:tube} follows from the near-independence of the histories $\Xi_i$ and from the arguments in Sections \ref{sec:subtube} and \ref{sec:suptube}.



\end{document}